\newenvironment{customthm}[1]
  {\innercustomthm}
  {\endinnercustomthm}
\newtheorem{theorem}{Theorem}[section]
\newtheorem{lemma}[theorem]{Lemma}
\newtheorem{conjecture}[theorem]{Conjecture}
\newtheorem{question}[theorem]{Question}
\newtheorem{proposition}[theorem]{Proposition}
\newtheorem{corollary}[theorem]{Corollary}
\theoremstyle{definition}
\newtheorem{definition}[theorem]{Definition}
\theoremstyle{remark}
\newtheorem{remark}[theorem]{Remark}
\newtheorem{example}[theorem]{Example}
\newcommand{\C}{\mathcal C}
\newcommand{\D}{\mathcal D}
\newcommand{\E}{\mathcal E}
\newcommand{\U}{\mathcal U}
\newcommand{\F}{\mathbb F}
\newcommand{\R}{\mathbb R}
\DeclareMathOperator{\link}{Lk}
\DeclareMathOperator{\Tk}{Tk}
\DeclareMathOperator{\mindim}{odim}
\DeclareMathOperator{\code}{code}
\newcommand{\od}{:=}
\DeclareMathOperator{\Hom}{Hom}
\newcommand{\Code}{\mathbf{Code}}
\newcommand{\NeurRing}{\mathbf{NRing}}
\newcommand{\ParCode}{\mathbf{P}_\Code}
\begin{document}
\title{Morphisms of Neural Codes}
%

\author{R. Amzi Jeffs}
\address{Department of Mathematics.  University of Washington, Seattle, Wa 98195}
\email{rajeffs@uw.edu}





\begin{abstract}
We define a notion of morphism between combinatorial codes, making the class of all combinatorial codes into a category $\Code$. We show that morphisms can be used to remove redundant information from a code, and that morphisms preserve convexity. We use the latter fact to define a partial order on all codes in which the class of convex codes forms a down-set. We investigate minimal obstructions to convexity in the form of ``minimally non-convex" codes, which lie on the boundary of this down-set. In particular, we show that there are infinitely many minimally non-convex codes and construct a minimally non-convex code with no local obstructions.  We conclude by giving an algebraic formulation of our results.
\end{abstract}

\thanks{Jeffs' research is partially supported by  graduate fellowship from NSF grant DGE-1761124.}
\date{\today}
\maketitle

\vspace{-1em}
\section{Introduction}\label{sec:intro}
Groundbreaking experimental work in \cite{okeefe} showed that certain hippocampal neurons in rats were active primarily in a convex subset of the animal's environment. Such neurons are called \emph{place cells}, and may be thought of as encoding a ``cognitive map" of an animal's environment. Understanding and characterizing the possible firing patterns of place cells is an important task: not only can it help analyze hippocampal data, it can also help determine whether or not other areas of the brain use similar coding systems. In this paper we introduce a notion of morphism between combinatorial codes in order to understand the relationships between different firing patterns, and supplement the mathematical tools used to characterize them.

To model the activity of place cells we use a \emph{combinatorial code} or \emph{neural code}, which is simply a subset of the Boolean lattice $2^{[n]}$. The indices in $[n]\od \{1,2,\ldots n\}$ are called \emph{neurons}, and elements of a code are called \emph{codewords}. Each codeword records a set of neurons which fire concurrently. Codewords will be written without brackets when it does not introduce ambiguity. For example, we will write 124 for $\{1,2, 4\}$. Given a collection $\U = \{U_1,\ldots, U_n\}$ of subsets of a set $X$ we can form the \emph{code of $\U$ in $X$}, defined as \[
\code(\U, X) \od \bigg\{\sigma\subseteq[n]\ \bigg | \bigcap_{i\in\sigma}U_i\setminus \bigcup_{j\notin\sigma}U_j \neq \emptyset\bigg\}
\]
where by convention the empty intersection is $X$. Informally, $\code(\U, X)$  records the ``regions" cut out by the sets $U_i$.
The set $X$ is called the \emph{ambient space} and the collection $\U$ is called a \emph{realization} of $\code(\U,X)$. The set $U_i$ is called the \emph{receptive field} corresponding to neuron $i$.

If a code $\C$ has a realization consisting of convex open sets in a convex open space $X \subseteq \R^d$, then $\C$ is called a \emph{convex code}. The figure below shows a convex realization of the code $\C = \{123, 12, 23, 2, 3, \emptyset\}$. The region which gives rise to the codeword 23 is highlighted.  
 \[
\includegraphics[scale=0.75]{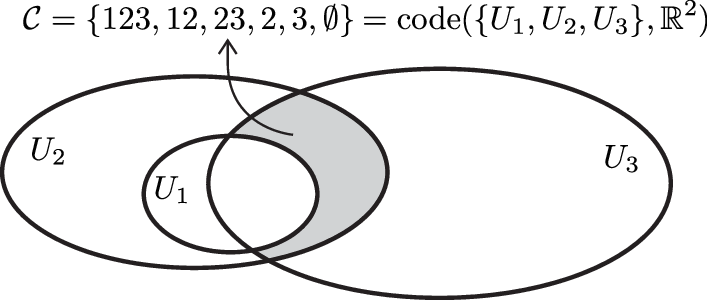}
\]
If a code $\C$ is convex, we can ask for its minimal \emph{open embedding dimension}, the smallest $d$ such that $\C$ has a convex open realization in a convex open space $X\subseteq \R^d$. This will be denoted as $\mindim(\C)$.

Motivated by the behavior of place cells, Curto et al \cite{neuralring13} asked the following question: which combinatorial codes are convex? This problem has been an active area of research in recent years  and a number of techniques have been brought to bear on it (see \cite{undecidability, openclosed, local15, obstructions}), but there is not yet even a conjectural characterization of convex codes. Some developments are summarized below. 

The work of \cite{neuralring13} introduces the \emph{neural ideal} and \emph{neural ring}, algebraic objects uniquely associated to any code. These objects provide an algebraic view of codes which highlights many important combinatorial features.

In \cite{openclosed}  it is shown that codes which are closed under intersections of maximal codewords are convex, with minimum embedding dimension bounded above by $\max\{2, k-1\}$ where $k$ is the number of maximal codewords in the code. Such codes are called \emph{max-intersection complete}. This implies that \emph{intersection complete} codes (codes closed under arbitrary intersections of codewords) are convex, and in particular codes which are abstract simplicial complexes are convex. 

The \emph{simplicial complex} of a code $\C$, denoted $\Delta(\C)$, is the downclosure of $\C$ in $2^{[n]}$. Building on the work of \cite{nogo}, \cite{local15} uses the simplicial complex of a code to describe \emph{local obstructions} to convexity via the nerve lemma. Any code with local obstructions is not convex, and  furthermore the converse holds for codes on up to 4 neurons. These results were recently extended in \cite{undecidability} and \cite{CUR}, which defined \emph{local obstructions of the second kind} and \emph{nerve obstructions} respectively. However, such obstructions do not characterize convex codes: \cite{obstructions} provides an example of a code on 5 neurons which is not convex, but which has no such obstructions. We recently broadened this example to an infinite family of such codes in \cite{sunflowers}.

Our aim in this paper is to define a notion of morphism for codes (see Definition \ref{def:morphism}) which provides insight to the problem of classifying convex codes. The main objects that we use to build these morphisms are trunks, defined as follows. 
\begin{definition}\label{def:trunk}
Let $\C\subseteq 2^{[n]}$ be a code and let $\sigma\subseteq[n]$. The \emph{trunk} of $\sigma$ in $\C$ is the set \[
\Tk_\C(\sigma) \od \{c\in \C \mid \sigma\subseteq c\}.
\] A subset of $\C$ is called a \emph{trunk in $\C$} if it is empty, or equal to $\Tk_\C(\sigma)$ for some $\sigma\subseteq[n]$.
\end{definition}
Trunks are in analogy to open stars in simplicial complexes. In fact, when $\C$ is a simplicial complex and $\sigma\in \C$ is a face, $\Tk_\C(\sigma)$ is just the open star of $\sigma$ in $\C$. Morphisms are the functions between codes which are ``continuous" with respect to trunks. More precisely: 
\begin{definition}\label{def:morphism}
Let $\C$ and $\D$ be codes. A function $f:\C\to\D$ is a \emph{morphism} if for every trunk $T\subseteq \D$ the preimage $f^{-1}(T)$ is a trunk in $\C$. A morphism is an \emph{isomorphism} if it has an inverse function which is also a morphism.
\end{definition}

Morphisms make the class of combinatorial codes into a category $\Code$. 
It is worth noting that there already exist some results in the literature describing notions of morphisms related to codes. In particular, \cite{mapsbetweencodes} studies a class of ``maps between codes," and shows that up to composition such maps are permutations of neurons, adding trivial neurons, duplicating the activity of a neuron, deleting a neuron, or including one code into another. These maps are all morphisms in our sense, and in particular Theorem \ref{thm:imageconvex} will generalize Theorem 4.3 of \cite{mapsbetweencodes}. For an example of a morphism that is not a map in the sense of \cite{mapsbetweencodes}, see Example \ref{ex:first}.

In \cite{thesispaper}, the author, Omar, and Youngs, defined a class of ring homomorphisms related to neural ideals, but these maps do not provide a sufficiently rich framework in which to examine convex codes. Up to composition, the action of these maps on codes consists of permutations, deleting a neuron, and flipping the activity of a single neuron. The first two maps are already examined in \cite{mapsbetweencodes}, and the third corresponds geometrically to replacing a receptive field $U_i$ by its complement, which does not preserve convexity or openness. Thus to investigate convex codes we need a different notion of morphism.

A notion of non-degeneracy for convex realizations is given in \cite{openclosed}. This notion stipulates that boundaries of regions in the realization do not overlap unless the regions themselves do (see Definition \ref{def:nondegenerate} for details). Non-degeneracy is a natural assumption in the biological context where the $U_i$ correspond to receptive fields of neurons. 

The relationship between morphisms, convexity, and non-degeneracy is summarized by Theorem \ref{thm:imageconvex} below.

\begin{theorem}\label{thm:imageconvex}
The image of a (non-degenerate) convex code under a morphism is again a (non-degenerate) convex code. The minimal embedding dimension of the image is less than or equal to that of the original code. In particular, convexity and minimal embedding dimension are isomorphism invariants. 
\end{theorem}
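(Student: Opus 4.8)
The plan is to show that a morphism $f \colon \C \to \D$ can be used to transport a convex realization of $\C$ to one of $\D$, essentially by taking unions of receptive fields along fibers of $f$. Concretely, suppose $\C = \code(\U, X)$ with $\U = \{U_1, \ldots, U_n\}$ a collection of convex open sets in $X \subseteq \R^d$, and $f \colon \C \to \D$ a morphism with $\D \subseteq 2^{[m]}$. For each $j \in [m]$, the set $\{d \in \D \mid j \in d\} = \Tk_\D(j)$ is a trunk in $\D$, so $f^{-1}(\Tk_\D(j))$ is a trunk in $\C$, meaning it equals $\Tk_\C(\sigma_j)$ for some $\sigma_j \subseteq [n]$ (or is empty). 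I would then define $V_j := \bigcap_{i \in \sigma_j} U_i$ — wait, that is convex but not obviously the right thing; the correct guess is dictated by which codewords of $\C$ map into $\Tk_\D(j)$, and a codeword $c$ lies in $\Tk_\C(\sigma_j)$ exactly when $\sigma_j \subseteq c$. So the atom of $\C$ indexed by $c$ should be assigned to $V_j$ precisely when $\sigma_j \subseteq c$, i.e. we want $x$ in the $c$-atom to lie in $V_j$ iff $\sigma_j \subseteq c$. This is achieved by setting $V_j := \Tk$-region, but more cleanly: since $X = \bigsqcup_{c \in \C} A_c$ where $A_c$ is the (nonempty) atom of $c$, define $V_j := \bigcup_{c \,:\, \sigma_j \subseteq c} A_c$. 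This need not be open or convex, so instead I would take $V_j := \bigcup_{c \,:\, \sigma_j \subseteq c}(\text{the open region } \bigcap_{i \in c} U_i \cap \text{small enough nbhd})$ — this is getting delicate, so let me restructure.

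The cleaner approach: observe that $V_j := \bigcap_{i \in \sigma_j} U_i$ is open and convex, and compute $\code(\{V_j\}_{j \in [m]}, X)$. A point $x \in X$ lies in the $c$-atom of $\U$ for a unique $c \in \C$ (assuming WLOG the realization is such that every atom of $\C$ is nonempty, which holds by definition of $\code(\U,X)$), and then $x \in V_j \iff \sigma_j \subseteq \{i : x \in U_i\} = c \iff c \in \Tk_\C(\sigma_j) = f^{-1}(\Tk_\D(j)) \iff f(c) \in \Tk_\D(j) \iff j \in f(c)$. Hence the $V_j$-codeword of $x$ is exactly $f(c)$. Therefore $\code(\{V_j\}, X) = f(\C) = \text{image}(f)$, exhibiting the image as a convex code in the same ambient space $X \subseteq \R^d$; this simultaneously gives $\mindim(\text{image}(f)) \le \mindim(\C)$ by taking $\U$ to be a realization of minimal dimension. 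The one wrinkle is the empty-trunk case: if $f^{-1}(\Tk_\D(j)) = \emptyset$, set $V_j := \emptyset$ (allowed, as $\emptyset$ is vacuously convex open, or else perturb to a tiny disjoint ball — but $\emptyset$ works since no codeword of the image contains $j$ in that case, as $\Tk_\D(j)$ has empty preimage forces $\Tk_\D(j) \cap \text{image}(f) = \emptyset$). I also need $\emptyset \in \code(\{V_j\},X)$ to match: if $\emptyset \in \text{image}(f)$ then some $c$ has $f(c) = \emptyset$, and the argument above shows the $c$-atom maps to the $\emptyset$-region; conversely the $\emptyset$-region of $\{V_j\}$ is covered by atoms of $\C$ whose image is $\emptyset$.

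For the final sentence — that convexity and minimal embedding dimension are isomorphism invariants — I would note that an isomorphism $f \colon \C \to \D$ is in particular a surjective morphism, so $\D = \text{image}(f)$ is convex with $\mindim(\D) \le \mindim(\C)$ whenever $\C$ is convex; applying the same to $f^{-1}$ gives $\mindim(\C) \le \mindim(\D)$, hence equality, and convexity of either implies convexity of the other. The main obstacle I anticipate is purely bookkeeping: making sure the atoms of $\U$ biject correctly with $\C$ (handled by the definition of $\code(\U,X)$, since codewords are exactly the labels of nonempty atoms), and correctly handling the empty trunk / empty codeword edge cases so that $\code(\{V_j\}, X)$ comes out to be exactly $\text{image}(f)$ and not something slightly larger or smaller. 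There is no serious geometric difficulty — convexity of $V_j = \bigcap_{i \in \sigma_j} U_i$ is immediate since it is a finite intersection of convex open sets — so the proof is essentially a careful unwinding of the definitions of trunk, morphism, and $\code(\cdot,\cdot)$.
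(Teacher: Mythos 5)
Your proposal is correct and is essentially the same argument as the paper's: identify the trunks $T_j = f^{-1}(\Tk_\D(j)) = \Tk_\C(\sigma_j)$, set $V_j = \bigcap_{i\in\sigma_j} U_i$ (with $V_j = \emptyset$ when $T_j$ is empty and the empty-intersection-is-$X$ convention when $\sigma_j = \emptyset$), and check pointwise that a point in the $c$-atom of $\U$ lands in the $f(c)$-atom of $\{V_j\}$. The detour in the middle of your write-up toward unions of atoms is unnecessary, as you yourself conclude, and the final clean version matches the paper's proof.
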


Theorem \ref{thm:imageconvex} tells us that surjective morphisms carry information about not only the (non-degenerate) convexity of a code, but also about the specific dimension in which it can be realized. Furthermore, it turns out that if $\C$ is (non-degenerately) convex in $\R^d$, then so is every trunk in $\C$ (see Proposition \ref{prop:trunkconvex}). We can partially order isomorphism equivalence classes of codes via surjective morphisms and ``replacement by a trunk", in analogy to partially ordering graphs via minors. We call the resulting poset $\ParCode$, and discuss its structure in Section \ref{sec:framework}. Note that Theorem \ref{thm:imageconvex} and Proposition \ref{prop:trunkconvex} imply that convex codes form a down-set in $\ParCode$, and that open embedding dimension is a monotone function on $\ParCode$.

 To characterize convex codes, it would be sufficient to characterize the codes $\C$ which are not convex, but for which every code below them in $\ParCode$ is convex. We call these codes \emph{minimally non-convex} (see Definition \ref{def:mnc}.) Informally, minimally non-convex codes can be thought of as minimal obstructions to convexity.

Neural data is often noisy or incomplete, and so tools like $\ParCode$ which encode relationships between different codes could prove useful in analyzing experimental data. Conveniently, one can characterize morphisms combinatorially (see Proposition \ref{prop:morphismfromtrunks}) and so examining data in the context of $\ParCode$ can be done algorithmically. Given a code $\C$ obtained from experimental data, one could compute the codes below $\C$ in $\ParCode$ and search for a minimally non-convex code, or codes whose embedding dimension is already known. 
Such an approach may be useful in an experimental context where understanding $\mindim(\C)$ is of interest, for example in studying the dimensionality of olfactory space. We carry out such an algorithmic process in the proof of Theorem \ref{thm:nolocalobs}, starting with a locally good non-convex code of \cite{obstructions} and finding a minimally non-convex code below it in $\ParCode$.

Some further mathematical work on $\ParCode$ already exists. In \cite{sunflowers} we give an explicit description of the covering relation in $\ParCode$, as well as an infinite family of minimally non-convex codes with no local obstructions, which are based on a new Helly-style theorem. Upcoming work in \cite{matroids} uses $\ParCode$ to connect the theory of convex codes to the theory of oriented matroids.

Before moving on to the body of the paper we summarize several additional results below.  The definitions of ``reduced" and ``minumum neuron number" for Theorem \ref{thm:nicerepresentative} are given in Section \ref{sec:combinatorics}. The definition of a ``monomial map" is given in Section \ref{sec:algebra}.

\begin{theorem}\label{thm:nicerepresentative}
Every isomorphism class in $\Code$  has a unique reduced representative, up to permutation of neurons. This representative is a subcode of $2^{[m]}$ where $m$ is the minimum neuron number of the codes in the isomorphism class.
\end{theorem}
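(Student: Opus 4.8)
The plan is to encode the isomorphism type of a code $\C$ by the pair consisting of the underlying set $\C$ together with its family of trunks $\mathcal{T}(\C)\od\{T\subseteq\C\mid T\text{ is a trunk in }\C\}$. First I would observe that an isomorphism $f\colon\C\to\D$ induces, via $S\mapsto f(S)$, a bijection between $\mathcal{T}(\C)$ and $\mathcal{T}(\D)$ that preserves inclusion, the empty set, the top element (the whole code), and intersections; this is immediate from applying Definition \ref{def:morphism} to both $f$ and $f^{-1}$. Since $\Tk_\C(\sigma)=\bigcap_{i\in\sigma}\Tk_\C(i)$ with the empty intersection equal to $\C$, the semilattice $\mathcal{T}(\C)$ is just the intersection-closure of the simple trunks $\Tk_\C(1),\dots,\Tk_\C(n)$ together with $\C$ and $\emptyset$, so all of the isomorphism-invariant data is carried by how the simple trunks sit inside $\C$.

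For existence I would show that any code can be shrunk to a reduced one within its isomorphism class. If neuron $i$ is redundant — meaning (in the sense of Section \ref{sec:combinatorics}) that $\Tk_\C(i)$ is empty or equals $\bigcap_{j\in\tau}\Tk_\C(j)$ for some $\tau\subseteq[n]\setminus\{i\}$ — then deleting column $i$, i.e. passing to $\C'\od\{c\setminus\{i\}\mid c\in\C\}$ on $[n]\setminus\{i\}$, is a bijection on codewords: two codewords differing only in $i$ would force $\tau\subseteq c'$ while $i\notin c'$, a contradiction. This bijection is an isomorphism because it carries each $\Tk_{\C'}(j)$ back to $\Tk_\C(j)$ for $j\neq i$ and recovers $\Tk_\C(i)$ as $\bigcap_{j\in\tau}\Tk_{\C'}(j)$ (the empty-trunk case is handled identically, with the deletion acting as the identity on codewords). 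Each such step strictly decreases the neuron count, so the process terminates at a reduced representative in the same isomorphism class; degenerate cases such as the empty code and the code $\{\emptyset\}$ are already reduced on $0$ neurons.

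For uniqueness and the minimality claim, the crux is to recover the simple trunks intrinsically from $(\C,\mathcal{T}(\C))$ when $\C$ is reduced. I would prove that in a reduced code the simple trunks are precisely the meet-irreducible elements of $\mathcal{T}(\C)\setminus\{\emptyset\}$ other than the top: reducedness says no $\Tk_\C(i)$ is an intersection of the others (nor empty nor all of $\C$), and a short minimal-support argument upgrades this to meet-irreducibility; conversely, writing a meet-irreducible $T\neq\emptyset,\C$ as $\Tk_\C(\sigma)$ with $|\sigma|$ minimal forces $|\sigma|\le 1$, so $T$ is simple. Granting this, if $\C\subseteq2^{[n]}$ and $\D\subseteq2^{[m]}$ are reduced and isomorphic via $f$, the induced semilattice isomorphism matches simple trunks with simple trunks bijectively; reducedness makes the $n$ simple trunks of $\C$ pairwise distinct, and likewise for $\D$, so $n=m$. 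Hence all reduced representatives share the same neuron number, and by the existence argument (which strictly lowers the neuron count whenever a code is not reduced) this common value is the minimum. Finally, the bijection on simple trunks is a permutation $\pi$ with $f^{-1}(\Tk_\D(\pi(i)))=\Tk_\C(i)$, so for $c\in\C$ we get $i\in c\iff f(c)\in\Tk_\D(\pi(i))\iff\pi(i)\in f(c)$, whence $f(c)=\{\pi(i)\mid i\in c\}$ and $\D=\pi(\C)$; thus the reduced representative is unique up to relabeling neurons.

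I expect the main obstacle to be the intrinsic characterization of simple trunks in a reduced code — that is, confirming that the Section \ref{sec:combinatorics} notion of ``reduced'' is exactly strong enough to make the simple trunks coincide with a canonically defined subset of $\mathcal{T}(\C)$ — together with the bookkeeping that verifies the deletion step in the existence half is a genuine isomorphism and that the various edge cases (empty code, $\{\emptyset\}$, a neuron lying in every or in no codeword) are subsumed. The remaining steps are routine manipulations with the definition of morphism.
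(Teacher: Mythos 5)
Your argument is correct, and it shares the uniqueness half with the paper (your intrinsic characterization of simple trunks in a reduced code as the meet-irreducible nonempty proper trunks is exactly Theorem~\ref{thm:reducedirredtrunks}, and your permutation argument is Corollary~\ref{cor:uniqueuptopermutation}), but your existence half is genuinely different. The paper builds the reduced representative in one shot: it lists the irreducible trunks $T_1,\ldots,T_m$, applies the morphism $f_S$ determined by them via Definition~\ref{def:morphismfromtrunks}, checks that $f_S$ is an isomorphism onto its image, and then invokes Lemmas~\ref{lem:imagetrivial} and~\ref{lem:imageredundant} to see that the image has no trivial or redundant neurons. You instead peel off one offending neuron at a time, verifying directly that the restriction map $\pi_{[n]\setminus\{i\}}$ is an isomorphism whenever $i$ is trivial or redundant; termination then comes for free from the strictly decreasing neuron count, and minimality of the neuron number falls out of the same induction once uniqueness is in hand. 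Your iterative route is more elementary and needs no machinery beyond the definition of a morphism and Proposition~\ref{prop:simpletrunks}, at the cost of redoing bijectivity and inverse-morphism checks at each step; the paper's one-shot construction is terser and produces, as a byproduct, an explicit canonical morphism onto the reduced representative (useful elsewhere, e.g.\ in the Sage implementation), and it reuses Lemmas~\ref{lem:imagetrivial} and~\ref{lem:imageredundant}, which earn their keep in other proofs. Both approaches hinge on Proposition~\ref{prop:irredsimple} (irreducible trunks are simple) to pin down the minimum neuron number; you reach it through the deletion process, the paper through a direct count of irreducible trunks as an isomorphism invariant.
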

\begin{theorem}\label{thm:intersectioncomplete}\label{thm:maxintersectioncomplete}
The image of an intersection complete code under a morphism is intersection complete. Likewise, the image of a max-intersection complete code is max-intersection complete. 
\end{theorem}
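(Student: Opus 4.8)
The plan is to work directly from the definition of morphism in terms of preimages of trunks, and to use the characterization of intersection completeness in terms of trunks. The key observation is that a code $\C$ is intersection complete if and only if, for every pair of codewords $c, c' \in \C$, the intersection $c \cap c'$ lies in $\C$; and that this can be detected by trunks since $\Tk_\C(\sigma) \cap \Tk_\C(\tau) = \Tk_\C(\sigma \cup \tau)$. So first I would record the elementary lemma that the collection of trunks in any code is closed under finite intersection (with the empty trunk playing the role of the intersection when $\sigma \cup \tau$ is contained in no codeword), and that $c = \bigcap_{\sigma : c \in \Tk_\C(\sigma)} (\text{something})$ — more precisely, that each codeword $c \in \C$ is the unique element of $\C$ lying in $\Tk_\C(c)$ but in no strictly larger trunk, i.e.\ trunks separate points and the maximal trunk containing a given codeword pins it down.

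Second, for the intersection-complete case, let $f : \C \to \D$ be a morphism with $\C$ intersection complete, and take $d, d' \in f(\C)$. I want to show $d \cap d' \in f(\C)$. Consider the trunk $T = \Tk_\D(d \cap d') \subseteq \D$; note $d, d' \in T$. Since $f$ is a morphism, $f^{-1}(T)$ is a trunk in $\C$, say $f^{-1}(T) = \Tk_\C(\rho)$ (the empty case is ruled out since $d,d'$ have preimages). Pick $a \in f^{-1}(d)$ and $a' \in f^{-1}(d')$; then $a, a' \in \Tk_\C(\rho)$, so $\rho \subseteq a \cap a'$, and by intersection completeness $a \cap a' \in \C$, hence $a \cap a' \in \Tk_\C(\rho) = f^{-1}(T)$. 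Therefore $f(a \cap a') \in T = \Tk_\D(d \cap d')$, so $d \cap d' \subseteq f(a \cap a')$. The remaining point is to upgrade this containment to equality, i.e.\ to see that $f(a \cap a')$ is exactly $d \cap d'$ and not something strictly larger. For this I would run the same argument with the smaller trunk $\Tk_\D(d \cap d' \cup \{k\})$ for each $k \in f(a\cap a') \setminus (d \cap d')$ and derive a contradiction, or more cleanly: show that $f$ reflects the trunk order in the sense that $f(x) \in \Tk_\D(\tau)$ forces $x \in f^{-1}(\Tk_\D(\tau))$, and conclude that $f(a \cap a')$ lies in every trunk $\Tk_\D(\tau)$ with $\tau \subseteq d \cap d'$ but, by minimality of $\rho$ / the structure of $f^{-1}$, in no larger one. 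The cleanest route is probably to first prove the general lemma that morphisms send the ``smallest trunk containing $a$'' to a trunk containing the ``smallest trunk containing $f(a)$,'' which forces $f(a \cap a') = d \cap d'$ once we know $d \cap d' \subseteq f(a \cap a')$ and $a \cap a'$ lies in $f^{-1}$ of the trunk of $d \cap d'$ but we also have $a\cap a' \supseteq$ the defining set of that preimage trunk. I expect this ``sharpening to equality'' to be the main obstacle; everything else is formal manipulation of the trunk calculus.

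Third, the max-intersection-complete case is an easy consequence once the structure of $f$ on maximal codewords is understood. The key extra fact is that a morphism $f : \C \to \D$ maps maximal codewords of $\C$ onto a set that contains all maximal codewords of $f(\C)$: if $d \in f(\C)$ is maximal in $f(\C)$ and $a \in f^{-1}(d)$, then replacing $a$ by any $c \in \C$ with $a \subseteq c$ and $c$ maximal, the order-preservation of $f$ on trunks gives $d \subseteq f(c) \in f(\C)$, hence $d = f(c)$ by maximality, so every maximal codeword of $f(\C)$ is the image of a maximal codeword of $\C$. Now given $d, d'$ maximal in $f(\C)$, write $d = f(c)$, $d' = f(c')$ with $c, c'$ maximal in $\C$; by max-intersection completeness $c \cap c' \in \C$, and by the intersection-complete argument above applied to this pair, $f(c \cap c') = f(c) \cap f(c') = d \cap d' \in f(\C)$. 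Since every max-intersection of $f(\C)$ arises this way, $f(\C)$ is max-intersection complete. The only subtlety is that the intersection-argument above was phrased for a general pair of codewords and used only that their intersection lies in $\C$, so it applies verbatim here; I would state that argument as a standalone lemma precisely so it can be reused.

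In summary: (1) establish the trunk calculus lemmas ($\Tk_\C(\sigma)\cap\Tk_\C(\tau)=\Tk_\C(\sigma\cup\tau)$, trunks separate points, morphisms are order-preserving on the trunk poset and "reflect" trunk membership); (2) prove the standalone lemma that if $a, a' \in \C$ with $a \cap a' \in \C$, then for any morphism $f$ one has $f(a \cap a') = f(a) \cap f(a')$; (3) deduce intersection completeness of $f(\C)$ immediately, and deduce max-intersection completeness using the fact that maximal codewords of $f(\C)$ are images of maximal codewords of $\C$. The heart of the matter is step (2), and within it the passage from $f(a)\cap f(a') \subseteq f(a\cap a')$ to equality.
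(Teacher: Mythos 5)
Your strategy is viable and genuinely different from the paper's, but the step you single out as the main obstacle --- upgrading $d\cap d'\subseteq f(a\cap a')$ to equality --- is not an obstacle at all: since $a\cap a'\subseteq a$ and $a\cap a'\subseteq a'$, Proposition \ref{prop:monotone} gives $f(a\cap a')\subseteq f(a)\cap f(a') = d\cap d'$ immediately. You already invoke monotonicity in your treatment of maximal codewords, so nothing new is needed, and the auxiliary devices you float for this step should be discarded: the ``reflection'' statement that $f(x)\in\Tk_\D(\tau)$ forces $x\in f^{-1}(\Tk_\D(\tau))$ is just the definition of preimage and proves nothing by itself, and the proposed contradiction using the trunks $\Tk_\D\big((d\cap d')\cup\{k\}\big)$ has no evident mechanism. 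With the one-line monotonicity argument inserted, your standalone lemma (if $a,a'\in\C$ and $a\cap a'\in\C$, then $f(a\cap a')=f(a)\cap f(a')$) is correct, and both halves of the theorem follow as you describe: apply it to arbitrary preimages $a,a'$ for intersection completeness, and combine it with your (correct) observation that every maximal codeword of $f(\C)$ is the image of a maximal codeword of $\C$ for the max-intersection case.

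For comparison, the paper takes a different route to the first half: it characterizes intersection completeness by the property that every nonempty trunk contains a unique minimal codeword (Lemma \ref{lem:intcomplete}), and then, for a surjective morphism and a nonempty trunk $T\subseteq\D$, transports the unique minimal element of the trunk $f^{-1}(T)$ to $T=f(f^{-1}(T))$ via monotonicity; the max-intersection case is handled by applying the first half to the subcode $\E\subseteq\C$ of maximal codewords and their intersections, together with the same observation you make about maximal codewords of the image. Your pointwise identity $f(a\cap a')=f(a)\cap f(a')$ is a slightly stronger and more reusable statement, at the cost of choosing preimages of individual codewords; the paper's argument avoids such choices but needs the trunk characterization of intersection completeness. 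Both proofs ultimately rest only on the trunk calculus and Proposition \ref{prop:monotone}.
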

\begin{theorem}\label{thm:neurring}
Let $\NeurRing$ be the category whose objects are neural rings, and whose morphisms are monomials maps. There is a contravariant equivalence of categories $R:\Code\to \NeurRing$ given by associating a code to its neural ring, and associating a morphism $f:\C\to\D$ to the ring homomorphism $R(f): R_\D\to R_\C$ given by precomposition with $f$. 
\end{theorem}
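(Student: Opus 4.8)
The plan is to verify that $R$ as described is a well-defined, essentially surjective, fully faithful contravariant functor. First I would recall the relevant algebra: for a code $\C \subseteq 2^{[n]}$ the neural ring $R_\C$ is the quotient of $\F_2[x_1,\ldots,x_n]$ by the vanishing ideal of $\C$ (viewing codewords as points of $\F_2^n$), so that $R_\C \cong \F_2^\C$ as a ring, with a distinguished set of generators $\bar x_1, \ldots, \bar x_n$. A monomial map should be defined (in Section \ref{sec:algebra}) as a ring homomorphism $R_\D \to R_\C$ carrying each generator $\bar y_i$ of $R_\D$ to a monomial (a product of some subset of the generators $\bar x_j$, or to $0$ or $1$) in $R_\C$. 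The key observation linking this to trunks is that, under the identification $R_\C \cong \F_2^\C$, a monomial $\prod_{j \in \sigma}\bar x_j$ is exactly the indicator function of the trunk $\Tk_\C(\sigma)$; this is the dictionary that makes everything work.

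Next I would establish the correspondence on morphisms. Given a morphism $f : \C \to \D$, precomposition sends a function $\D \to \F_2$ to a function $\C \to \F_2$, i.e. gives a ring homomorphism $f^* : \F_2^\D \to \F_2^\C$, hence $R_\D \to R_\C$. To see $f^*$ is a monomial map, note $f^*$ sends the indicator of $\Tk_\D(\{i\})$ to the indicator of $f^{-1}(\Tk_\D(\{i\}))$, which by Definition \ref{def:morphism} is a trunk in $\C$, hence by the dictionary above is a monomial (or $0$ if the trunk is empty). Conversely, given a monomial map $\varphi : R_\D \to R_\C$, transporting along $R_\C \cong \F_2^\C$ yields an $\F_2$-algebra homomorphism $\F_2^\D \to \F_2^\C$; since $\F_2$-algebra homomorphisms $\F_2^\D \to \F_2^\C$ are precisely the maps of the form $g^*$ for a unique function $g : \C \to \D$ (this is the standard fact that $\operatorname{Spec}$ of a finite product of copies of $\F_2$ recovers the index set, or equivalently idempotents correspond to subsets), we recover a function $g$. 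The monomial condition on $\varphi$ forces $g^{-1}$ of each principal trunk $\Tk_\D(\{i\})$ to be a trunk in $\C$; since an arbitrary trunk $\Tk_\D(\tau)$ is the intersection $\bigcap_{i \in \tau}\Tk_\D(\{i\})$ and preimages commute with intersections (and trunks in $\C$ are closed under the relevant intersections, with the empty set allowed), $g$ is a morphism. These two constructions are mutually inverse and functorial (contravariantly), giving full faithfulness; and essential surjectivity is immediate since every object of $\NeurRing$ is by definition $R_\C$ for some $\C$.

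The main obstacle I anticipate is not any single deep step but rather the bookkeeping around degenerate cases and the precise definition of "monomial map." One must be careful that the empty trunk is permitted (corresponding to a generator mapping to $0 \in R_\C$) and that the constant function $1$ (the empty monomial) corresponds to $\Tk_\C(\emptyset) = \C$; getting the category $\NeurRing$ set up so that these boundary monomials are included is essential for the functor to be full. A second point requiring care is checking that the composite of two monomial maps is a monomial map — this is where one needs that a monomial in monomials is again a monomial, which is where substituting $\varphi(\bar y_i) = \prod \bar x_j$ into another monomial map behaves well; equivalently, on the code side it follows for free from the fact that morphisms compose (Definition \ref{def:morphism} makes $\Code$ a category), so it is cleanest to prove functoriality on the $\Code$ side first and then transport. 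Finally I would remark that faithfulness of $R$ on objects — i.e. that $R_\C \cong R_{\C'}$ as rings with chosen generators forces $\C = \C'$ — follows from the fact that $\C$ is recovered as the set of $\F_2$-points of $R_\C$, so no information is lost.
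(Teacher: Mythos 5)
Your proposal is correct, and on the fullness direction it takes a genuinely different and cleaner route than the paper. The paper, given a monomial map $\phi:R_\D\to R_\C$, explicitly \emph{constructs} a candidate morphism $f:\C\to 2^{[m]}$ from the trunks $T_j$ read off from $\phi(y_j)$, then separately verifies that $f(c)\in\D$ by showing $\phi(\rho_{f(c)})$ is nonzero in $R_\C$ when evaluated at $c$, and finally checks $f^*=\phi$ generator by generator. You instead invoke the standard duality: since $R_\C\cong\F_2^\C$ is a finite product of copies of $\F_2$, the primitive idempotents are the point indicators $\rho_c$, so every unital ring homomorphism $\F_2^\D\to\F_2^\C$ is $g^*$ for a unique function $g:\C\to\D$. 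This gets you the underlying function $g$ and the inclusion $g(\C)\subseteq\D$ for free, so the only thing left to check is that the monomial condition on $\phi$ translates to $g$ being a morphism — which follows immediately from your observation that $x_\sigma$ under the identification $R_\C\cong\F_2^\C$ is the indicator of $\Tk_\C(\sigma)$, together with Proposition \ref{prop:simpletrunks}. Your route also folds faithfulness into the same bijection, whereas the paper proves it separately via indicator functions. What the paper's more pedestrian argument buys in return is avoiding any appeal to the idempotent/Spec picture, keeping the proof entirely self-contained at the level of polynomial evaluation.

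One small place where your proposal is thinner than the paper: you dismiss essential surjectivity as immediate. The paper points out a genuine wrinkle here — in $\Code$ a code $\C\subseteq 2^{[n]}$ and the ``same'' code regarded as sitting inside $2^{[m]}$ with $m>n$ and neurons $n+1,\ldots,m$ trivial are the same object, yet the associated neural rings have different numbers of generators. So one must exhibit a monomial isomorphism between those two rings (send the extra $x_j$ to $0$) to see that $R$ is well-defined on objects and that every neural ring is isomorphic in $\NeurRing$ to one in the image. This is routine, but it is not literally ``immediate,'' and your writeup should acknowledge it.
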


\begin{remark}
We stipulate that the ambient space $X$ is always open and convex in a convex realization of a code. This contrasts \cite{local15} and \cite{undecidability}, which make no such assumption. A consequence of this is that we may refer to the empty set as a local obstruction, whereas this was not possible in previous literature. Although our assumption that $X$ is convex is somewhat at odds with existing conventions, it makes a number of proofs and definitions more straightforward. 
\end{remark}

\begin{remark}Throughout the paper we will only speak of codes with realizations consisting of \emph{open} convex sets. Other works such as \cite{openclosed} work with closed convex sets, and many of our results still apply in this context (in particular, Theorem \ref{thm:imageconvex} and Proposition \ref{prop:trunkconvex} still hold). However, in the interest of concision and consistency we will not include an explicit discussion of the case of closed convex sets. \end{remark}

The remainder of the paper is structured as follows. We begin with some basic examples and results regarding morphisms in Section \ref{sec:definitions}. In Section \ref{sec:combinatorics} we describe how morphisms affect combinatorial features of codes, and prove Theorem \ref{thm:nicerepresentative} and Theorem \ref{thm:intersectioncomplete}. Section \ref{sec:convexity} begins with a proof of Theorem \ref{thm:imageconvex} and expands on the relationship between morphisms and convexity. This motivates Section \ref{sec:framework} in which we introduce minimally non-convex codes (Definition \ref{def:mnc}) and describe an infinite family of minimally non-convex codes (Proposition \ref{prop:infiniteantichain}) as well as a minimally non-convex code with no local obstructions (Theorem \ref{thm:nolocalobs}). Section \ref{sec:algebra} provides an algebraic characterization of morphisms with a proof of Theorem \ref{thm:neurring}, and we conclude with several conjectures and open questions.

\section{Morphisms of Codes: Basic Definitions and Properties}\label{sec:definitions}

In this section we will develop the basic theory of morphisms, in parallel with some illuminating examples. First let us recall an example of a code which is not convex. 

\begin{example}\label{ex:nonconvex}
Let $\C = \{12,23,1,3,\emptyset\}$. We claim that $\C$ is not a convex code. Indeed, suppose for contradiction that $\{U_1,U_2,U_3\}$ were a realization of $\C$ by convex open sets. Since the only codewords containing $2$ are $12$ and $23$, we see that $U_1$ and $U_3$ cover $U_2$, and both intersect it nontrivially. But 1 and 3 never occur in the same codeword, so $U_1$ and $U_3$ are disjoint. Thus $U_2$ is covered by two disjoint open sets which both intersect it nontrivially. Since $U_2$ is connected this is impossible, so $\C$ is not a convex code. In fact, this argument shows that $\C$ cannot even be realized by \emph{connected} open sets. For an example of a code that can be realized by connected open sets, but not convex open sets, see \cite{obstructions}. The code $\C_0$ of Theorem \ref{thm:nolocalobs} is another example.
\end{example}

Before proceeding with further examples, we describe some basic notation and elementary results regarding trunks and morphisms. We will sometimes write $\Tk(\sigma)$ rather than $\Tk_\C(\sigma)$ when it does not introduce ambiguity. 
In general, trunks enjoy a number of nice properties that we will make repeated use of. A first useful property of trunks is that they are closed under intersections. 
\begin{proposition}\label{prop:trunkintersection}
The intersection of two trunks is a trunk.
\end{proposition}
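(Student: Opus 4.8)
The plan is to show directly that the intersection of two trunks is again a trunk by identifying the defining set. The key observation is that for $\sigma, \tau \subseteq [n]$, a codeword $c \in \C$ lies in both $\Tk_\C(\sigma)$ and $\Tk_\C(\tau)$ precisely when $\sigma \subseteq c$ and $\tau \subseteq c$, which happens if and only if $\sigma \cup \tau \subseteq c$. Hence $\Tk_\C(\sigma) \cap \Tk_\C(\tau) = \Tk_\C(\sigma \cup \tau)$, which is a trunk by definition. This is the heart of the argument.

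To finish, I would handle the degenerate cases built into Definition \ref{def:trunk}: a trunk is allowed to be empty even when it is not of the form $\Tk_\C(\sigma)$. So suppose $T_1$ and $T_2$ are trunks in $\C$. If either one is the empty set, then $T_1 \cap T_2 = \emptyset$, which is a trunk by fiat, and we are done. Otherwise both are of the form $T_1 = \Tk_\C(\sigma)$ and $T_2 = \Tk_\C(\tau)$ for some $\sigma, \tau \subseteq [n]$, and the identity above shows $T_1 \cap T_2 = \Tk_\C(\sigma \cup \tau)$ is a trunk. (One could also simply note that $\Tk_\C(\sigma \cup \tau)$ may itself be empty, but that is fine since the empty set is explicitly permitted as a trunk.)

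There is essentially no obstacle here; the only thing to be careful about is not to overlook the empty-set clause in the definition of a trunk, and to remember that $\sigma$ and $\tau$ need not be elements of $\C$ — the definition quantifies over all subsets of $[n]$, so no faces-of-a-complex reasoning is needed. The argument is purely set-theoretic and does not use any earlier result in the excerpt. If desired, one could remark that this proposition generalizes immediately to arbitrary (even infinite) intersections of nonempty trunks, since $\bigcap_{i} \Tk_\C(\sigma_i) = \Tk_\C\!\left(\bigcup_i \sigma_i\right)$, though only the binary case is needed in the sequel.
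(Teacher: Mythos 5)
Your proof is correct and follows essentially the same route as the paper: handle the empty-trunk case separately, then observe that $\Tk_\C(\sigma)\cap\Tk_\C(\tau)=\Tk_\C(\sigma\cup\tau)$. The extra remarks (on infinite intersections and on $\sigma,\tau$ ranging over all of $2^{[n]}$) are fine but not needed.
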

\begin{proof}
Let $\C\subseteq2^{[n]}$ be a code, and let $T_1$ and $T_2$ be trunks in $\C$. If either $T_1$ or $T_2$ is empty, then $T_1\cap T_2 = \emptyset$, which is by definition a trunk in $\C$. Otherwise $T_1$ and $T_2$ are nonempty, so there exist $\sigma,\tau\subseteq[n]$ so that $T_1 = \Tk(\sigma)$ and $T_2 = \Tk(\tau)$. But from the definition of a trunk $T_1\cap T_2 = \Tk(\sigma\cup \tau)$. 
\end{proof}

\begin{remark}
Based on Proposition \ref{prop:trunkintersection}, one might think to use trunks as a base for a topology on $\C$, and define morphisms to be continuous functions with respect to this topology. However, this is not sufficient. The topology generated by trunks is simply the topology in which open sets are the up-sets with respect to the partial order on codewords, and so this would reduce morphisms to monotone maps. This insufficiency is demonstrated concretely in Example \ref{ex:first}.
\end{remark}

Throughout the paper trunks of single neurons will play a significant role. We refer to these trunks as simple. 
\begin{definition}\label{def:simpletrunk} Trunks of the form $\Tk(\{i\})$ will be called \emph{simple} trunks, and denoted $\Tk(i)$. 
\end{definition}

A useful consequence of Proposition \ref{prop:trunkintersection} is that to determine whether a function is a morphism, we need only examine the preimages of simple trunks. This is captured in the following proposition, which we will make use of a number of times. 
\begin{proposition}\label{prop:simpletrunks}
Let $\C\subseteq 2^{[n]}$ and $\D\subseteq 2^{[m]}$ be codes. A function $f:\C\to\D$ is a morphism if and only if for every $i\in[m]$, $f^{-1}(\Tk_\D(i))$ is a trunk in $\C$. 
\end{proposition}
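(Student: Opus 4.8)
The plan is to prove the two implications separately, with essentially all the content residing in the ``if'' direction. The ``only if'' direction is immediate: if $f$ is a morphism, then by Definition \ref{def:morphism} the preimage of \emph{every} trunk in $\D$ is a trunk in $\C$, and simple trunks $\Tk_\D(i)$ are in particular trunks, so $f^{-1}(\Tk_\D(i))$ is a trunk in $\C$ for each $i\in[m]$.

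For the converse, I would assume that $f^{-1}(\Tk_\D(i))$ is a trunk in $\C$ for every $i\in[m]$, and show that $f^{-1}(T)$ is a trunk in $\C$ for an arbitrary trunk $T\subseteq\D$. The case $T=\emptyset$ is handled by observing $f^{-1}(\emptyset)=\emptyset$, which is a trunk by definition. Otherwise $T=\Tk_\D(\tau)$ for some $\tau\subseteq[m]$. The key algebraic identity to isolate is that a trunk of a set decomposes as an intersection of simple trunks: for any $c\in\D$ we have $\tau\subseteq c$ if and only if $i\in c$ for all $i\in\tau$, so $\Tk_\D(\tau)=\bigcap_{i\in\tau}\Tk_\D(i)$. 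Taking preimages commutes with intersections, hence $f^{-1}(\Tk_\D(\tau))=\bigcap_{i\in\tau}f^{-1}(\Tk_\D(i))$, a finite intersection of trunks in $\C$ by hypothesis. Proposition \ref{prop:trunkintersection} tells us the intersection of two trunks is a trunk, and a routine induction on $|\tau|$ upgrades this to finite intersections, so $f^{-1}(\Tk_\D(\tau))$ is a trunk in $\C$, as desired.

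The only genuine subtlety—and the step I expect to need the most care—is the degenerate case $\tau=\emptyset$, where the intersection above is empty and the induction has nothing to act on. Here I would argue directly: $\Tk_\D(\emptyset)=\D$ since every codeword contains $\emptyset$, so $f^{-1}(\Tk_\D(\emptyset))=f^{-1}(\D)=\C=\Tk_\C(\emptyset)$, which is a trunk in $\C$. With this base case in hand the induction on $|\tau|$ can alternatively be started at $|\tau|=0$ (or $|\tau|=1$, using the hypothesis), and applying Proposition \ref{prop:trunkintersection} at each step completes the argument. No deeper input is required; the proposition is really just the statement that simple trunks generate all trunks under finite intersection, combined with the fact that preimages respect intersections.
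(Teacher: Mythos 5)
Your proof is correct and follows essentially the same route as the paper's: decompose $\Tk_\D(\tau)$ as $\bigcap_{i\in\tau}\Tk_\D(i)$, pull back, and invoke Proposition \ref{prop:trunkintersection} (extended to finite intersections). The only difference is that you handle the degenerate cases $T=\emptyset$ and $\tau=\emptyset$ explicitly, which the paper leaves tacit; that extra care is fine and does not change the substance.
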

\begin{proof}
The forward implication follows from the definition of morphism. For the reverse implication, observe that for any $\tau\subseteq[m]$, \[
f^{-1}(\Tk_\D(\tau)) = f^{-1}\bigg(\bigcap_{i\in\tau} \Tk_\D(i)\bigg) = \bigcap_{i\in\tau} f^{-1}(\Tk_\D(i)).
\]
By hypothesis the right-hand term is a finite intersection of trunks, which by Proposition \ref{prop:trunkintersection} is a trunk in $\C$. Thus $f$ is a morphism.
\end{proof}

Observe that every trunk in a code is an up-set in the partial order, but not vice-versa. A consequence of this fact is that morphisms preserve the partial order on a code. However, not every partial order preserving function is a morphism. This is illucidated in the following proposition and example. 

\begin{proposition}\label{prop:monotone}
Morphisms are monotone: if $f:\C\to\D$ is a morphism and $c_1,c_2\in \C$ are such that $c_1\subseteq c_2$, then $f(c_1)\subseteq f(c_2)$. 
\end{proposition}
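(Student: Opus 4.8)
The plan is to use the characterization from Proposition~\ref{prop:simpletrunks} together with the observation that trunks are up-sets. Suppose $f\colon\C\to\D$ is a morphism and $c_1\subseteq c_2$ in $\C$. I want to show $f(c_1)\subseteq f(c_2)$, i.e. that every $i\in[m]$ with $i\in f(c_1)$ also lies in $f(c_2)$. Fix such an $i$. Then $f(c_1)\in\Tk_\D(i)$, so $c_1\in f^{-1}(\Tk_\D(i))$.

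Now $f^{-1}(\Tk_\D(i))$ is a trunk in $\C$ (it is nonempty since it contains $c_1$), hence equals $\Tk_\C(\sigma)$ for some $\sigma\subseteq[n]$. Thus $\sigma\subseteq c_1$, and since $c_1\subseteq c_2$ we get $\sigma\subseteq c_2$, so $c_2\in\Tk_\C(\sigma)=f^{-1}(\Tk_\D(i))$. Therefore $f(c_2)\in\Tk_\D(i)$, i.e. $i\in f(c_2)$. Since $i$ was an arbitrary element of $f(c_1)$, this gives $f(c_1)\subseteq f(c_2)$.

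There is no real obstacle here; the only thing to be a little careful about is the edge case where $f^{-1}(\Tk_\D(i))$ could a priori be the empty trunk, but this cannot happen because it contains $c_1$. Everything else is a direct unwinding of the definitions, relying on the elementary fact that $c\in\Tk_\C(\sigma)$ precisely when $\sigma\subseteq c$.
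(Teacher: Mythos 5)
Your proof is correct and uses essentially the same idea as the paper's: pull back a trunk containing $f(c_1)$ and exploit the fact that trunks are up-sets to transfer membership from $c_1$ to $c_2$. The only cosmetic difference is that the paper pulls back the single trunk $\Tk_\D(f(c_1))$ all at once, whereas you decompose it into the simple trunks $\Tk_\D(i)$ for $i\in f(c_1)$; since $\Tk_\D(f(c_1))=\bigcap_{i\in f(c_1)}\Tk_\D(i)$, these are the same argument.
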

\begin{proof}
Consider the trunk $f^{-1}(\Tk_\D(\{f(c_1)\})$. It contains $c_1$ by construction, and since $c_1\subseteq c_2$ we conclude that $c_2$ also lies in this trunk. Hence $f(c_2)$ lies in $\Tk_\D(\{f(c_1)\})$. By definition, this implies that $f(c_1)\subseteq f(c_2)$.
\end{proof}

\begin{example}\label{ex:first}\label{ex:trunks}\label{ex:monotone}
Below are the Hasse diagrams of two combinatorial codes $\C = \{12, 23, 1, 3,\emptyset\}$ and $\D= \{12, 34, 1, 3, \emptyset\}$. The code $\C$ is the non-convex code from Example \ref{ex:nonconvex}, while the code $\D$ is an intersection complete code and hence convex. Observe that these two codes are naturally isomorphic when regarded as posets. In fact, they are homeomorphic when given the topology generated by trunks. However, we claim that $\C$ and $\D$ are \emph{not} isomorphic as codes.
\[
\includegraphics{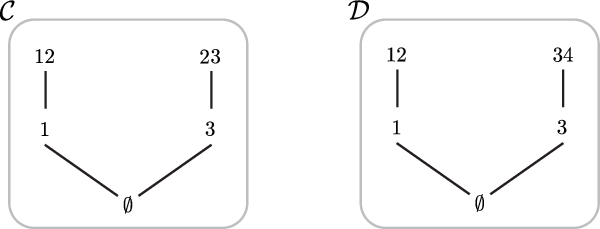}
\]

The critical difference between the codes $\C$ and $\D$ above is that the two maximal codewords in $\C$ both contain the neuron 2, while in $\D$ the two maximal codewords do not share any neurons. The diagram below shows the codes above with all nonempty trunks highlighted. One sees immediately that the trunks capture the fact that the maximal codewords of $\C$ have nonempty intersection while those of $\D$ do not.\[
\includegraphics{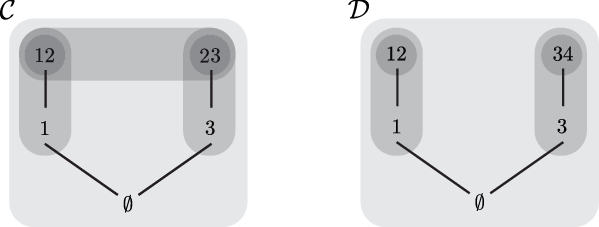}
\]
These two codes cannot be isomorphic since an isomorphism would induce a bijection on trunks, and $\C$ has one more trunk than $\D$. However, there is nevertheless a natural bijective morphism $\C\to \D$. It is given by  \begin{align*}
12&\mapsto 12&&&
23&\mapsto 34&&&
1&\mapsto 1&&&
3&\mapsto 3&&&
\emptyset&\mapsto\emptyset
\end{align*}
One can check that this bijection is a morphism. However its inverse is \emph{not} a morphism, since the preimage of the trunk $\{12, 23\}\subseteq \C$ is $\{12, 34\}$, which is not a trunk in $\D$. This provides an example of a bijective monotone map between codes which is not a morphism. 
\end{example}

In the remainder of this section we describe a few general examples of morphisms, and provide a useful characterization of morphisms in Definition \ref{def:morphismfromtrunks} and Proposition \ref{prop:morphismfromtrunks}. This characterization essentially states that every morphism can be thought of as recording the intersection pattern of a set of trunks in its domain. This fact proves enormously useful, and is one of the main ingredients to proving the results outlined in the introduction.

\begin{definition}\label{def:permutationisomorphism}
Let $\C\subseteq 2^{[n]}$ and let $w\in S_n$ be a permutation of $[n]$. Define a map $p_w:\C\to 2^{[n]}$ by $p_w(c) = w(c)$. The map $p_w$ is called a \emph{permutation morphism}. It is an isomorphism onto its image, and its inverse is the restriction of $p_{w^{-1}}$ to this image.
\end{definition}
\begin{definition}\label{def:intersectionmorphism}
Let $\C\subseteq 2^{[n]}$ be a code, and let $\gamma\subseteq [n]$. Define a function $\pi_\gamma:\C\to2^{[n]}$ by $\pi_\gamma(c) = c\cap \gamma$. This is called the \emph{restriction morphism} defined by $\gamma$. We will use $\C|_\gamma$ to denote $\pi_\gamma(\C)$.
\end{definition}

A restriction morphism $\pi_\gamma$ has the effect of forgetting the activity of all neurons not in $\gamma$. It can be thought of as ``deleting" the neurons not in $\gamma$, in the sense of \cite{mapsbetweencodes}. One can also define a union morphism by replacing each codeword in $\C$ by its union with $\gamma$. This generalizes the notion of adding a ``trivial neuron" as described in \cite{mapsbetweencodes}. Note that if $\Delta$ is a simplicial complex on $[n]$ and $\gamma\subseteq [n]$, then $\Delta|_\gamma$ is the usual restriction of $\Delta$ to $\gamma$.

We now turn to a general method of constructing morphisms. Remarkably, Proposition \ref{prop:morphismfromtrunks} will show that every morphism arises in this way. 

\begin{definition}\label{def:morphismfromtrunks}
Let $\C\subseteq 2^{[n]}$ be a code, and let $S=\{T_1,\ldots, T_m\}$ be a finite collection of trunks in $\C$. Define a function $f_S:\C\to2^{[m]}$ by \[
f_S(c) = \{j\in[m]\mid c\in T_j\}.
\]
The function $f_S$ is called the \emph{morphism determined by the trunks in $S$}. 
\end{definition}

\begin{proposition}
The function described in Definition \ref{def:morphismfromtrunks} is a morphism.
\end{proposition}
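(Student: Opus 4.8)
The cleanest route is to invoke Proposition \ref{prop:simpletrunks}: it suffices to show that for every $j \in [m]$, the preimage $f_S^{-1}(\Tk_{2^{[m]}}(j))$ is a trunk in $\C$. So I would first unwind what this preimage is. A codeword $c \in \C$ lies in $f_S^{-1}(\Tk_{2^{[m]}}(j))$ exactly when $j \in f_S(c)$, which by the definition of $f_S$ happens exactly when $c \in T_j$. Hence $f_S^{-1}(\Tk_{2^{[m]}}(j)) = T_j$, which is a trunk in $\C$ by hypothesis. Applying Proposition \ref{prop:simpletrunks} then finishes the argument.

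If one prefers not to route through Proposition \ref{prop:simpletrunks}, the same computation handles an arbitrary trunk directly. Given a trunk $T \subseteq 2^{[m]}$, either $T = \emptyset$, in which case $f_S^{-1}(T) = \emptyset$ is a trunk in $\C$ by definition; or $T = \Tk_{2^{[m]}}(\tau)$ for some $\tau \subseteq [m]$. In the latter case, $c \in f_S^{-1}(T)$ iff $\tau \subseteq f_S(c)$ iff $c \in T_j$ for every $j \in \tau$, so that $f_S^{-1}(T) = \bigcap_{j \in \tau} T_j$. This is a finite intersection of trunks in $\C$ (and the empty intersection, when $\tau = \emptyset$, is all of $\C = \Tk_\C(\emptyset)$), hence a trunk in $\C$ by Proposition \ref{prop:trunkintersection}. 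Either way $f_S$ is a morphism.

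There is essentially no obstacle here: the entire content is the bookkeeping identity $f_S^{-1}(\Tk(j)) = T_j$, which is immediate from how membership in $f_S(c)$ was defined. The only minor point worth stating carefully is the degenerate case $\tau = \emptyset$ (equivalently, verifying that the codomain's full trunk pulls back correctly), and the fact that $\C$ itself is the trunk $\Tk_\C(\emptyset)$, so that an empty intersection causes no trouble. I would present the proof in the two-line form via Proposition \ref{prop:simpletrunks}, since that proposition was stated precisely to make arguments like this one short.
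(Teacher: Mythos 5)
Your primary argument is exactly the paper's proof: invoke Proposition \ref{prop:simpletrunks} and observe that $f_S^{-1}(\Tk(j)) = T_j$ by the definition of $f_S$. The alternative direct computation you sketch is a fine unwinding but adds nothing beyond what Proposition \ref{prop:simpletrunks} already packages.
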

\begin{proof}
By Proposition \ref{prop:simpletrunks} we need only check that $f_S^{-1}(\Tk(j))$ is a trunk in $\C$ for all $j\in[m]$. But by construction $f_S(c)\in \Tk(j)$ if and only if $c\in T_j$. Thus $f_S^{-1}(\Tk(j)) = T_j$ for all $j$, and so $f_S$ is a morphism. 
\end{proof}

\begin{proposition}\label{prop:morphismfromtrunks}
Every morphism is of the form described in Definition \ref{def:morphismfromtrunks}. In particular, if $\C\subseteq 2^{[n]}$ and $\D\subseteq 2^{[m]}$ are codes and $f:\C\to\D$ is a morphism, then $f$ is the morphism determined by the trunks $\{T_1,\ldots, T_m\}$ where $T_j = f^{-1}(\Tk_\D(j))$, and we restrict the domain of $f$ from $2^{[m]}$ to $\D$. 
\end{proposition}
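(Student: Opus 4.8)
The plan is to show that an arbitrary morphism $f:\C\to\D$ agrees with the morphism $f_S$ determined by the collection $S = \{T_1,\ldots,T_m\}$, where $T_j = f^{-1}(\Tk_\D(j))$ for each $j\in[m]$. Note first that these $T_j$ are indeed trunks in $\C$: since $f$ is a morphism and $\Tk_\D(j)$ is a trunk in $\D$, each preimage $f^{-1}(\Tk_\D(j))$ is a trunk in $\C$ by definition. So $S$ is a legitimate finite collection of trunks in $\C$, and $f_S:\C\to 2^{[m]}$ is a well-defined morphism by the preceding proposition.

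The core of the argument is a direct computation showing $f_S(c) = f(c)$ for every $c\in\C$. By definition, $f_S(c) = \{j\in[m]\mid c\in T_j\}$. Now $c\in T_j = f^{-1}(\Tk_\D(j))$ if and only if $f(c)\in\Tk_\D(j)$, which by the definition of a simple trunk holds if and only if $j\in f(c)$ (recall $f(c)\subseteq[m]$ since $f(c)\in\D\subseteq 2^{[m]}$). Therefore $f_S(c) = \{j\in[m]\mid j\in f(c)\} = f(c)$, as desired. Since $f$ and $f_S$ agree on all of $\C$ and $f$ takes values in $\D$, the map $f$ is precisely the restriction to $\D$ of the morphism $f_S:\C\to 2^{[m]}$, which is exactly the assertion of the proposition.

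I do not anticipate a serious obstacle here; the statement is essentially a bookkeeping identity, and the only subtlety is making sure the chain of ``if and only if'' equivalences correctly unwinds the definitions of $f_S$, of the preimage $T_j$, and of a simple trunk $\Tk_\D(j)$. The one point worth stating carefully is that membership $f(c)\in\Tk_\D(j)$ unpacks as $\{j\}\subseteq f(c)$, i.e. $j\in f(c)$, using Definition \ref{def:trunk} with $\sigma = \{j\}$; everything else is immediate.
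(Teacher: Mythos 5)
Your proof is correct and follows essentially the same route as the paper: both reduce the claim to the equivalence $c\in T_j \Leftrightarrow f(c)\in\Tk_\D(j)$ and unwind definitions. You use the standard set-theoretic identity $c\in f^{-1}(A)\Leftrightarrow f(c)\in A$ in one step where the paper spells out the two implications separately, but this is a cosmetic difference, not a different argument.
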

\begin{proof}
We must show that $f(c) = \{j\in[m]\mid c\in T_j\}$, or equivalently that $f(c)\in \Tk_\D(j)$ if and only if $c\in T_j$. For the forward implication, observe that $f(c)\in \Tk_\D(j)$ implies that $c\in f^{-1}(\Tk_\D(j)) = T_j$. The converse follows from the fact that if $c\in T_j$ then $f(c)\in f(T_j)\subseteq \Tk_\D(j)$. This proves the result. 
\end{proof}

Qualitatively, Proposition \ref{prop:morphismfromtrunks} shows that every morphism can be thought of as simply recording the intersection patterns of a collection of trunks in a code. 
This characterization of morphisms is dually useful as a tool in proofs, and a method of constructing morphisms concretely. On the one hand, given an arbitrary morphism $f:\C\to\D$, one knows that the behavior of $f$ is completely determined by a collection of trunks in $\C$. On the other hand, if one seeks to define a morphism with codomain $\C$, one needs only select finitely many trunks in $\C$.

\section{Using Morphisms To Remove Redundancies From A Code}\label{sec:combinatorics}

In this section we describe how to pare down a code so that it does not contain redundant information. Several of the results below are useful in later proofs, and many are interesting in their own right. Our main result in this section is Theorem \ref{thm:nicerepresentative}, which shows that every code can be replaced by an isomorphic code with no trivial or redundant neurons. We begin by formally defining when a neuron is ``trivial" or ``redundant." 

\begin{definition}
A neuron $i\in[n]$ is \emph{trivial} in a code $\C\subseteq 2^{[n]}$ if $\Tk_\C(i) = \emptyset$. Equivalently, $i$ is trivial in $\C$ if and only if it does not appear in any codeword of $\C$. 
\end{definition}

\begin{definition}\label{def:redundant}
Let $\C\subseteq2^{[n]}$ be a code, let $i\in[n]$ be a nontrivial neuron in $\C$, and let $\sigma\subseteq[n]$ be such that $i\notin \sigma$. Then $i$ is \emph{redundant} to $\sigma$ if $\Tk_\C(i) = \Tk_\C(\sigma)$. For any $i\in[n]$ we call $i$ simply \emph{redundant} if there exists $\sigma$ so that $i$ is redundant to $\sigma$. 
\end{definition}

\begin{definition}\label{def:reduced}
A code is called \emph{reduced} if it does not have any trivial or redundant neurons.
\end{definition}

Note that if two neurons $i$ and $j$ always appear together in a code, then $i$ is redundant to $\{j\}$. Thus our notion of redundancy generalizes the situation in which two neurons have identical behavior. 

\begin{example}\label{ex:redundant}
Consider the code $\{123, 1, 2, \emptyset\}$. In this code, $3$ is redundant to the set $\{1,2\}$ since $\Tk(3)= \{123\} = \Tk(\{1,2\}) $. Note that in any realization $\{U_1,U_2,U_3\}$ of this code, we must have $U_3 = U_1\cap U_2$. If $U_1$ and $U_2$ are convex and open, this implies that $U_3$ is as well. Thus the convexity of the code is unaffected by the presence of the redundant neuron. This is true in general: if $i$ is redundant to $\sigma$ then the receptive field $U_i$ will be equal to the intersection of the $U_j$ with $j\in\sigma$. 
In Section \ref{sec:convexity} we will see more formally that trivial and redundant neurons do not have any bearing on whether a code is convex. 
\end{example}

We now introduce the concept of an irreducible trunk in a code. These play a crucial role in characterizing reduced codes and proving Theorem \ref{thm:nicerepresentative}.

\begin{definition}\label{def:irreducibletrunk}
Let $\C\subseteq 2^{[n]}$ be a code. A trunk $T\subseteq \C$ is called \emph{irreducible} if $T\neq \emptyset$, $T$ is a proper subset of $\C$, and $T$ is not the intersection of two trunks that properly contain it.
\end{definition}

Observe that every trunk is an intersection of irreducible trunks. Thus the irreducible trunks are the unique minimum set of trunks which generate all other trunks under intersection. We prove below that irreducible trunks are simple. We will see in Theorem \ref{thm:reducedirredtrunks} that the converse holds when a code is reduced. 

\begin{proposition}\label{prop:irredsimple}
Let $\C\subseteq 2^{[n]}$ be a code and let $T\subseteq \C$ be an irreducible trunk. Then $T  =\Tk_\C(i)$ for some $i$.
\end{proposition}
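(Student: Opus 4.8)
The plan is to start with an arbitrary trunk $T = \Tk_\C(\sigma)$ and show that if $\sigma$ has more than one element, then either $T$ can be written as an intersection of two strictly larger trunks, or $T$ already equals a simple trunk. First I would dispose of the trivial shapes: since $T$ is irreducible it is nonempty and a proper subset of $\C$, so in particular $\sigma \neq \emptyset$ (as $\Tk_\C(\emptyset) = \C$). Write $\sigma = \{i_1, \ldots, i_k\}$ with $k \geq 1$. The key identity is
\[
\Tk_\C(\sigma) = \bigcap_{t=1}^{k} \Tk_\C(i_t),
\]
which follows immediately from the definition of a trunk (a codeword contains $\sigma$ iff it contains each $i_t$), exactly as in the proof of Proposition \ref{prop:simpletrunks}.

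Next I would argue by contradiction: suppose $k \geq 2$ and $T \neq \Tk_\C(i_t)$ for every $t$. Group the simple trunks as $\Tk_\C(i_1)$ on one side and $\bigcap_{t=2}^{k}\Tk_\C(i_t)$ on the other, so that $T = \Tk_\C(i_1) \cap \bigl(\bigcap_{t=2}^{k}\Tk_\C(i_t)\bigr)$, and both factors are trunks by Proposition \ref{prop:trunkintersection}. Since $\sigma \supseteq \{i_1\}$ and $\sigma \supseteq \{i_2,\ldots,i_k\}$, both factors contain $T$. For irreducibility to fail I need both containments to be proper. The containment $\Tk_\C(i_1) \supsetneq T$: if it were an equality, that would directly contradict our assumption that $T$ is not a simple trunk (it would equal $\Tk_\C(i_1)$). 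The containment $\bigcap_{t=2}^{k}\Tk_\C(i_t) \supsetneq T$: here I would split into cases. If $k \geq 3$, I can instead regroup as $\Tk_\C(i_1)\cap \Tk_\C(i_2)$ versus $\bigcap_{t=3}^k \Tk_\C(i_t)$ — but cleaner is to handle $k=2$ and $k\geq 3$ uniformly by choosing the partition of $\sigma$ into two nonempty pieces $\sigma = \sigma_1 \sqcup \sigma_2$ and noting $T = \Tk_\C(\sigma_1)\cap\Tk_\C(\sigma_2)$; if \emph{both} $\Tk_\C(\sigma_1)$ and $\Tk_\C(\sigma_2)$ strictly contain $T$, we contradict irreducibility. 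So it remains to rule out the case where, for every partition, one of the two pieces already has $\Tk_\C(\sigma_j) = T$.

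The main obstacle is precisely this last point: showing that a genuinely non-simple trunk $T = \Tk_\C(\sigma)$ with $|\sigma| \geq 2$ admits \emph{some} partition $\sigma = \sigma_1 \sqcup \sigma_2$ into nonempty parts with $\Tk_\C(\sigma_1) \supsetneq T$ and $\Tk_\C(\sigma_2) \supsetneq T$. I would approach this by contradiction: suppose no such partition exists. Take $\sigma$ minimal (with respect to inclusion) among subsets with $\Tk_\C(\sigma) = T$; then removing any element $i \in \sigma$ gives $\Tk_\C(\sigma \setminus i) \supsetneq T$. Pick any $i \in \sigma$ and use the partition $\sigma = \{i\} \sqcup (\sigma\setminus\{i\})$: by minimality $\Tk_\C(\sigma\setminus\{i\}) \supsetneq T$, so for this partition to fail we must have $\Tk_\C(\{i\}) = T$ — contradicting that $T$ is not simple. (One should double-check the edge case $k=2$, where $\sigma \setminus \{i\}$ is a singleton, separately if needed, but the same argument covers it.) This yields the desired contradiction, so $|\sigma| = 1$ and $T = \Tk_\C(i)$ for some $i$, completing the proof.
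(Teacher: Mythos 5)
Your proof is correct and follows the same strategy as the paper: write $T = \Tk_\C(\sigma) = \bigcap_{i\in\sigma}\Tk_\C(i)$ and use irreducibility to force a single factor $\Tk_\C(i)$ to equal $T$. You are in fact more careful than the paper's own one-paragraph proof, which simply asserts that each $\Tk_\C(i)$ must equal $\C$ or $T$; your minimality-of-$\sigma$ argument is exactly what is needed to justify that step, since for a non-minimal choice of $\sigma$ one of the simple trunks can sit strictly between $T$ and $\C$ while the remaining intersection already collapses to $T$.
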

\begin{proof}
Write $T = \Tk_\C(\sigma)$ for some $\sigma$, noting that we can do so because $T \neq \emptyset$. Then we have that $T = \bigcap_{i\in\sigma} \Tk_\C(i)$. Since $T$ is irreducible, all the terms in this intersection must be either equal to $\C$ or equal to $T$. At least one term must be equal to $T$, since $T\neq \C$. Thus we have that $T = \Tk_\C(i)$ for some $i\in\sigma$, proving the result.
\end{proof}

The following theorem uses irreducible trunks to give a concise characterization of reduced codes. An immediate consequence of this is Corollary \ref{cor:uniqueuptopermutation}, which tells us that the only isomorphisms between reduced codes are permutation isomorphisms. 

\begin{theorem}\label{thm:reducedirredtrunks}
Let $\C\subseteq 2^{[n]}$ be a code. Then $\C$ is reduced if and only if the map $i\mapsto \Tk(i)$ is a bijection between neurons and the irreducible trunks in $\C$. 
\end{theorem}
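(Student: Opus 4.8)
The plan is to prove both directions of the biconditional by unpacking the definitions of trivial, redundant, and irreducible trunk, and leaning on Proposition \ref{prop:irredsimple}. Throughout, let $\Phi:i\mapsto \Tk_\C(i)$ denote the map from $[n]$ to trunks in $\C$; we want to show $\C$ is reduced iff $\Phi$ is a bijection onto the set of irreducible trunks.

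For the forward direction, assume $\C$ is reduced. First I would check that $\Phi$ lands in the irreducible trunks. Suppose $\Tk_\C(i)$ is not irreducible. Since $i$ is nontrivial (no trivial neurons), $\Tk_\C(i)\neq\emptyset$. If $\Tk_\C(i)=\C$, then every codeword contains $i$, so $\Tk_\C(i)=\C=\Tk_\C(\emptyset)$, making $i$ redundant to $\emptyset$, a contradiction. Otherwise $\Tk_\C(i)$ is a proper nonempty trunk that is an intersection $T_1\cap T_2$ of two trunks properly containing it; writing $T_1=\Tk_\C(\sigma_1)$, $T_2=\Tk_\C(\sigma_2)$, we get $\Tk_\C(i)=\Tk_\C(\sigma_1\cup\sigma_2)$ with $i\notin\sigma_1\cup\sigma_2$ (if $i$ were in, say, $\sigma_1$, then $T_1\subseteq\Tk_\C(i)$, contradicting proper containment) — so $i$ is redundant to $\sigma_1\cup\sigma_2$, again a contradiction. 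Hence $\Phi$ maps into irreducible trunks. Injectivity: if $\Tk_\C(i)=\Tk_\C(j)$ with $i\neq j$, then $i$ is redundant to $\{j\}$, contradicting reducedness. Surjectivity is exactly Proposition \ref{prop:irredsimple}: every irreducible trunk equals $\Tk_\C(i)$ for some $i$, so it is hit by $\Phi$.

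For the reverse direction, assume $\Phi$ is a bijection between $[n]$ and the irreducible trunks in $\C$. I need to show no neuron is trivial and no neuron is redundant. No neuron is trivial: if $i$ were trivial, $\Tk_\C(i)=\emptyset$, which is not an irreducible trunk (irreducible trunks are nonempty by definition), so $\Phi$ would not map $i$ into the irreducible trunks, contradicting the hypothesis that $\Phi$ is a bijection onto them. No neuron is redundant: suppose $i$ is redundant to some $\sigma$ with $i\notin\sigma$, so $\Tk_\C(i)=\Tk_\C(\sigma)=\bigcap_{k\in\sigma}\Tk_\C(k)$. Since $i$ is nontrivial, $\Tk_\C(i)$ is nonempty; since $\Phi$ is a bijection onto irreducible trunks, $\Tk_\C(i)$ is irreducible. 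But then the intersection $\bigcap_{k\in\sigma}\Tk_\C(k)$ equals $\Tk_\C(i)$, and irreducibility forces each $\Tk_\C(k)$ ($k\in\sigma$) to equal either $\C$ or $\Tk_\C(i)$; since $\Tk_\C(i)\neq\C$ would require... here I need to rule out $\Tk_\C(i)=\C$ separately — but if $\Tk_\C(i)=\C$ then $\C$ itself is an irreducible trunk, which is impossible since irreducible trunks are proper subsets of $\C$; so $\Tk_\C(i)\subsetneq\C$. Then some $k\in\sigma$ has $\Tk_\C(k)=\Tk_\C(i)$, contradicting injectivity of $\Phi$ (as $k\neq i$). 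Hence no neuron is redundant, and $\C$ is reduced.

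The main obstacle I anticipate is the careful bookkeeping in the redundancy arguments: one must repeatedly handle the degenerate cases $\Tk_\C(i)=\emptyset$ and $\Tk_\C(i)=\C$ (corresponding to trivial neurons and neurons redundant to $\emptyset$), and correctly track that $i\notin\sigma$ forces strict containment $\Tk_\C(\sigma)\subsetneq\Tk_\C(i)$ is \emph{not} what happens — rather $\Tk_\C(i)=\Tk_\C(\sigma)$ and the terms of the intersection properly contain it unless one of them already equals it. Getting the "properly contains" hypothesis of irreducibility to interact correctly with the union/intersection identity $\Tk_\C(\sigma\cup\tau)=\Tk_\C(\sigma)\cap\Tk_\C(\tau)$ from Proposition \ref{prop:trunkintersection} is the crux; everything else is routine once the definitions are in hand.
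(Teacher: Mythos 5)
Your proof is correct and follows essentially the same plan as the paper's: show $\Tk_\C(i)$ is irreducible when $\C$ is reduced (nonempty since no trivial neurons, proper since otherwise redundant to $\emptyset$, not an intersection of properly larger trunks since otherwise redundant to $\sigma\cup\tau$), injectivity from non-redundancy, surjectivity from Proposition~\ref{prop:irredsimple}, and the converse by unwinding triviality and redundancy against irreducibility and injectivity. One mild over-statement in your converse — that irreducibility forces \emph{every} $\Tk_\C(k)$ for $k\in\sigma$ to be $\C$ or $\Tk_\C(i)$ — is not literally true (a term can strictly lie between them while the remaining intersection already collapses to $\Tk_\C(i)$); what irreducibility actually yields, by the repeated grouping argument, is that \emph{some} term must equal $\Tk_\C(i)$, which is all you use, and this is the same shortcut the paper itself takes in the proof of Proposition~\ref{prop:irredsimple}, while its own converse instead uses injectivity first and then groups to contradict irreducibility.
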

\begin{proof}
First suppose that $\C$ is reduced. We argue that $\Tk(i)$ is irreducible for all $i\in[n]$. Note that $\Tk(i)$ is nonempty since $\C$ has no trivial neurons, and that $\Tk(i)$ is a proper trunk since otherwise $i$ would be redundant to $\emptyset$. To prove that $\Tk(i)$ is irreducible, we just have to show it is not the intersection of two trunks properly containing it. Suppose for contradiction that $\Tk(i) = \Tk(\sigma)\cap \Tk(\tau)$ where $\Tk(\sigma)$ and $\Tk(\tau)$ properly contain $\Tk(i)$. Since the containment is proper, we have that $i\notin \sigma\cup\tau$. But $\Tk(\sigma)\cap \Tk(\tau) = \Tk(\sigma\cup\tau)$, so $i$ is redundant to $\sigma\cup\tau$. Since $\C$ is reduced this is a contradiction.

Next we note that the map $i\mapsto\Tk(i)$ is surjective by Proposition \ref{prop:irredsimple}. Thus we just have to prove that the map is injective. Suppose not, so that $\Tk(i) = \Tk(j)$ for some $i\neq j$. Then $i$ is redundant to $\{j\}$, which is a contradiction since $\C$ is reduced. This proves the forward implication.

For the converse, suppose that $i\mapsto \Tk(i)$ is a bijection between neurons and irreducible trunks, and let $i\in[n]$ be arbitrary. Since $\Tk(i)$ is irreducible, it is nonempty, and $i$ is not trivial. Suppose for contradiction that $i$ were redundant to some $\sigma\subseteq[n]$. Then $\Tk(i) = \bigcap_{j\in\sigma} \Tk(j)$. Since the map $i\mapsto \Tk(i)$ is injective, $\Tk(i)\neq \Tk(j)$ for all $j\in\sigma$, so in particular $\Tk(i)$ is properly contained in all $\Tk(j)$ in the intersection. But then we can group the terms in the intersection appropriately so that $\Tk(i)$ is the intersection of two trunks that properly contain it, contradicting its irreducibility. This proves the result. 
\end{proof}

\begin{corollary}\label{cor:uniqueuptopermutation}
Let $\C\subseteq2^{[n]}$ and $\D\subseteq 2^{[m]}$ be codes, and let $f:\C\to \D$ be an isomorphism. If both $\C$ and $\D$ are reduced, then $f$ is a permutation isomorphism. 
\end{corollary}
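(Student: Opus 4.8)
The plan is to show that an isomorphism $f:\C\to\D$ between reduced codes must respect the correspondence between neurons and irreducible trunks, and then extract a permutation of $[n] = [m]$ from this. First I would observe that since $f$ is an isomorphism, it induces a bijection $T \mapsto f^{-1}(T)$ between trunks in $\D$ and trunks in $\C$: indeed $f^{-1}$ of a trunk is a trunk since $f$ is a morphism, $f$ of a trunk is a trunk since $f^{-1}$ is a morphism, and these operations are mutually inverse. This bijection preserves the containment order and also preserves (finite) intersections, because $f^{-1}(T_1 \cap T_2) = f^{-1}(T_1)\cap f^{-1}(T_2)$. Since irreducibility of a trunk is defined purely in terms of the containment order and intersections among trunks (a trunk is irreducible iff it is nonempty, properly contained in the whole code, and not the intersection of two strictly larger trunks), the bijection $T\mapsto f^{-1}(T)$ restricts to a bijection between the irreducible trunks of $\D$ and the irreducible trunks of $\C$.

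Next I would invoke Theorem \ref{thm:reducedirredtrunks}: since $\C$ is reduced, the map $i\mapsto \Tk_\C(i)$ is a bijection from $[n]$ onto the irreducible trunks of $\C$, and similarly $j\mapsto \Tk_\D(j)$ is a bijection from $[m]$ onto the irreducible trunks of $\D$. Composing, we get a bijection $[m]\to[n]$, which forces $m = n$; call a chosen such bijection (its inverse, suitably oriented) $w\in S_n$, defined by $\Tk_\C(i) = f^{-1}(\Tk_\D(w(i)))$ for each $i$. By Proposition \ref{prop:morphismfromtrunks}, $f$ is the morphism determined by the trunks $T_j = f^{-1}(\Tk_\D(j))$ for $j\in[m]=[n]$, i.e. $f(c) = \{j \mid c\in T_j\}$. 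Now I would compute, for $c\in\C$, that $f(c) = \{j \mid c \in f^{-1}(\Tk_\D(j))\} = \{w(i) \mid c\in f^{-1}(\Tk_\D(w(i)))\} = \{w(i)\mid c\in \Tk_\C(i)\} = \{w(i)\mid i\in c\} = w(c)$, where the step $c\in\Tk_\C(i)\iff i\in c$ is just the definition of a simple trunk. Hence $f = p_w$ is a permutation isomorphism.

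The one subtlety to be careful about is the directionality and indexing in the step "composing, we get a bijection $[m]\to[n]$, hence $m=n$." Strictly, reducedness of $\D$ gives that every irreducible trunk of $\D$ is $\Tk_\D(j)$ for a unique $j$, and likewise for $\C$; the order-and-intersection-preserving bijection $T\mapsto f^{-1}(T)$ between irreducible trunks then transports one indexing set to the other bijectively. I expect the main (very mild) obstacle is simply being careful that irreducibility really is a purely order-theoretic / intersection-theoretic property of the poset of trunks — so that an order-and-meet-preserving bijection of trunk-posets must carry irreducibles to irreducibles — rather than anything that could secretly depend on the ambient $2^{[n]}$; this follows directly from Definition \ref{def:irreducibletrunk}. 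Everything else is bookkeeping with Proposition \ref{prop:morphismfromtrunks} and the definition of simple trunks.
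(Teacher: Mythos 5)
Your proof is correct and takes essentially the same approach as the paper: both invoke Theorem \ref{thm:reducedirredtrunks} to identify neurons with irreducible trunks in each reduced code, transport irreducibility along the bijection on trunks induced by $f$, and read off a permutation. You simply fill in two details the paper leaves implicit — that irreducibility is preserved because the trunk bijection respects containment and intersections, and the final verification via Proposition \ref{prop:morphismfromtrunks} that $f(c) = w(c)$ — which makes your writeup more careful but not a different argument.
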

\begin{proof}
The isomorphism $f$ induces a bijection between irreducible trunks in $\C$ and irreducible trunks in $\D$. By Theorem \ref{thm:reducedirredtrunks} the sets of irreducible trunks in $\C$ and $\D$ are in bijection with the respective neurons in each code. Thus $f$ induces a bijection $[n]\to [m]$ by associating $i$ to $j$ whenever $f(\Tk_\C(i)) = \Tk_\D(j)$. This proves that $f$ is a permutation isomorphism. 
\end{proof}

Next we introduce the minimum neuron number of a code. Intuitively, the minimum neuron number is the smallest number of neurons needed to faithfully represent the non-redundant combinatorial information present in a code. It is an isomorphism invariant, and  Theorem \ref{thm:nicerepresentative} implies that it is achieved exactly when a code is reduced. 

\begin{definition}\label{def:minimumneuronnumber}
Let $\C\subseteq 2^{[n]}$ be a code. The \emph{minimum neuron number} of $\C$ is the smallest $m$ such that $\C$ is isomorphic to a subcode of $2^{[m]}$. 
\end{definition}

\begin{example}
The code $\{2,12\}$ has minimum neuron number equal to 1, even though it is a code on two neurons. This is because it is isomorphic to $\{\emptyset, 1\}$. The code $\{\emptyset, 2, 3\}$ has minimum neuron number equal to 2, since it is isomorphic to $\{\emptyset, 1, 2\}$, but not isomorphic to any code on a single neuron. The codes $\C$ and $\D$ of Example \ref{ex:first} have minimum neuron numbers 3 and 4 respectively. The minimum neuron numbers for $\C$ and $\D$ correspond with their actual number of neurons because they are reduced.
\end{example}

Before proving Theorem \ref{thm:nicerepresentative} we provide one last supporting lemma, which states that deleting a redundant neuron is an isomorphism.

\begin{lemma}\label{lem:deleteredundant}
Let $\C\subseteq 2^{[n]}$ be a code, and suppose that $n$ is a redundant neuron. Then the restriction map $\pi:\C\to \C|_{[n-1]}$ given by $c\mapsto c\cap [n-1]$ is an isomorphism. 
\end{lemma}
\begin{proof}
It suffices to show that the induced map $T\mapsto \pi^{-1}(T)$ is a bijection on trunks. Observe that it is injective on trunks since $\pi$ is a surjective map. Indeed, if $\pi^{-1}(T_1) = \pi^{-1}(T_2)$, then $T_1 = \pi(\pi^{-1}(T_1)) = \pi(\pi^{-1}(T_2)) = T_2$. Furthermore, observe that $\pi$ is bijective on codewords since the presence of $n$ in a codeword is completely controlled by the presence of neurons not equal to $n$. 

To see that this map is surjective on trunks, let $T\subseteq \C$ be any trunk. We may write $T$ as an intersection of irreducible trunks $T = T_1\cap T_2\cap \cdots \cap T_k$, all of which are simple by Proposition \ref{prop:irredsimple}. Since $n$ is a redundant neuron, $\Tk_\C(n)$ is not irreducible, and so no $T_i$ is equal to $\Tk_\C(n)$. But for $i\in [n-1]$, we see that $\pi(\Tk_\C(i)) = \Tk_{\C|_{[n-1]}}(i)$, so $\pi(T_i)$ is a trunk in $\C|_{[n-1]}$ for all $i\in [k]$. Since $\pi$ is bijective, we may write $\pi(T) = \pi(T_1)\cap \cdots \cap \pi(T_k)$, and so $\pi(T)$ is a trunk. Finally, observe by bijectivity that $T = \pi^{-1}(\pi(T))$. Thus $\pi^{-1}$ is surjective on trunks and the result follows.
\end{proof}
\begin{customthm}{\ref{thm:nicerepresentative}}
Every isomorphism class in $\Code$  has a unique reduced representative, up to permutation of neurons. This representative is a subcode of $2^{[m]}$ where $m$ is the minimum neuron number of the codes in the isomorphism class.
\end{customthm}
\begin{proof}[Proof of Theorem \ref{thm:nicerepresentative}]
Consider the isomorphism class of a code $\C$. By Lemma \ref{lem:deleteredundant}, we may repeatedly delete redundant neurons from $\C$ to obtain an isomorphic code with no redundant neurons. We can then permute neurons so that those which are nontrivial come first, and delete the trivial neurons to obtain an isomorphic code with no trivial neurons. The resulting code will be reduced, and thus every isomorphism class contains at least one reduced code. By Corollary \ref{cor:uniqueuptopermutation} this reduced code is unique up to permutation isomorphism. 

Let $\C\subseteq 2^{[n]}$ be reduced and let $m$ be the minimum neuron number of $\C$, noting that $m\le n$. Since $\C$ is reduced, Theorem \ref{thm:reducedirredtrunks} implies that $n$ is the number of irreducible trunks in $\C$. Proposition \ref{prop:irredsimple} then implies that $m\ge n$.  Thus $m=n$ and the result follows.

\end{proof}


Theorem \ref{thm:nicerepresentative} is useful on several fronts. First, it tells us that the ``important" combinatorial information in any code can be completely captured by a code with no trivial or redundant neurons, and moreover that this representative is unique up to reordering the neurons in the code. This allows us to reduce codes that at first glance might seem complicated to codes that are simpler in the sense of having fewer neurons. The proof above gives us a concrete method of finding this representative: simply search for redundant neurons and delete them until none are left. 



We conclude this section by examining how other combinatorial properties of codes behave under morphisms. We show that morphisms preserve intersection completeness and max-intersection completeness, and provide two characterizations of intersection completeness.

\begin{lemma}\label{lem:intcomplete}
A code is intersection complete if and only if all of its nonempty trunks contain a unique minimal codeword.
\end{lemma}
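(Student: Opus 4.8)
The plan is to prove both directions directly from the definition of a trunk and the definition of intersection completeness. Recall that a code $\C$ is intersection complete when it is closed under intersections of codewords, and that $\Tk_\C(\sigma) = \{c \in \C \mid \sigma \subseteq c\}$ is a trunk precisely when it is nonempty or of this form for some $\sigma$.

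For the forward direction, suppose $\C$ is intersection complete and let $T \subseteq \C$ be a nonempty trunk, say $T = \Tk_\C(\sigma)$. I would consider the intersection $c^\ast := \bigcap_{c \in T} c$ of all codewords in $T$; this is a finite intersection since $\C$ is finite, so by intersection completeness $c^\ast \in \C$. The key observation is that $\sigma \subseteq c^\ast$, because $\sigma \subseteq c$ for every $c \in T$ by definition of the trunk; hence $c^\ast \in \Tk_\C(\sigma) = T$. Now $c^\ast$ is contained in every element of $T$ by construction, so it is the unique minimal codeword of $T$ — uniqueness is automatic once a minimum (not merely minimal) element exists.

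For the converse, suppose every nonempty trunk of $\C$ contains a unique minimal codeword, and let $c_1, c_2 \in \C$; I must show $c_1 \cap c_2 \in \C$. The natural move is to look at the trunk $T := \Tk_\C(c_1 \cap c_2)$. This trunk is nonempty since it contains both $c_1$ and $c_2$ (each contains $c_1 \cap c_2$), so by hypothesis it has a unique minimal codeword $c^\ast$. I claim $c^\ast = c_1 \cap c_2$. On one hand $c_1 \cap c_2 \subseteq c^\ast$ is false in general — rather, $c^\ast$ being minimal in $T$ and $c^\ast \supseteq c_1 \cap c_2$ holds by definition of $T$; but also $c^\ast \subseteq c_1$ and $c^\ast \subseteq c_2$ would give $c^\ast \subseteq c_1 \cap c_2$ and hence equality. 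To get $c^\ast \subseteq c_1$: both $c^\ast$ and $c_1$ lie in $T$, and the real leverage is that uniqueness of the minimal element forces $c^\ast$ to be contained in every element of $T$ (if $c^\ast \not\subseteq c_1$, then $c^\ast \cap c_1$... but we cannot assume that is a codeword — this is exactly the circularity to avoid). Instead I would argue: the minimal codeword $c^\ast$ of $T$ must be $\subseteq$ every codeword of $T$, because if some $d \in T$ did not contain $c^\ast$, then neither $c^\ast$ nor $d$ would be below the other, and one checks the set of minimal elements of $T$ then has size $\geq 2$ unless... — here I would instead show directly that "unique minimal element" is equivalent to "has a minimum element": a finite poset with a unique minimal element need not have a minimum in general, so the hypothesis must be read as the stronger statement, or Lemma needs the observation that in a trunk the unique minimal codeword is automatically a minimum. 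I expect this subtlety — reconciling "unique minimal" with "minimum" inside a trunk — to be the main obstacle, and I would resolve it by noting that if $c^\ast$ is the unique minimal element of $T = \Tk_\C(\sigma)$ and $d \in T$, then any minimal codeword of $\Tk_\C(c^\ast \cup \sigma) \subseteq T$ lying below $d$ would be $c^\ast$ itself, forcing $c^\ast \subseteq d$. Once $c^\ast \subseteq c_1$ and $c^\ast \subseteq c_2$ are established, $c^\ast = c_1 \cap c_2 \in \C$ follows and the proof is complete.
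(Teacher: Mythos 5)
Your overall approach is the same as the paper's: for the forward direction take the intersection of all codewords in the trunk, and for the converse consider $\Tk_\C(c_1\cap c_2)$ and show its unique minimal codeword equals $c_1\cap c_2$. The forward direction is correct and complete.

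The converse is where you run into trouble, and the trouble is self-inflicted. You assert that ``a finite poset with a unique minimal element need not have a minimum in general'' --- this is false. In a finite poset $P$, every element $d$ lies above some minimal element: the down-set $\{y \in P : y \le d\}$ is finite and nonempty, hence has a minimal element, and any such element is minimal in $P$ itself. So if $P$ has a unique minimal element $c^\ast$, then $c^\ast \le d$ for every $d$, i.e.\ $c^\ast$ is a minimum. Since a code is a subset of the finite set $2^{[n]}$, every trunk is a finite poset and this observation applies directly. That one sentence dissolves the difficulty you spend the last paragraph wrestling with, and it is the (unstated) justification the paper itself relies on when it says $c_3$ ``is contained in both $c_1$ and $c_2$.''

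Your attempted workaround at the end does not salvage things as written. You look at $\Tk_\C(c^\ast \cup \sigma)$; since $\sigma \subseteq c^\ast$ this is just $\Tk_\C(c^\ast)$, and you then speak of a minimal codeword of $\Tk_\C(c^\ast)$ ``lying below $d$'' --- but if $c^\ast \not\subseteq d$ then $d \notin \Tk_\C(c^\ast)$ and there need not be any element of $\Tk_\C(c^\ast)$ below $d$ at all, so the argument has nothing to run on. Replace that paragraph with the finite-poset observation above, and the rest of your converse (namely $c_1\cap c_2 \subseteq c^\ast \subseteq c_1\cap c_2$, hence $c_1\cap c_2 = c^\ast \in \C$) goes through cleanly.
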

\begin{proof} For the forward implication, the unique minimal element of a trunk is simply the intersection of all its elements. For the converse, let $c_1,c_2\in \C$ and let $\sigma = c_1\cap c_2$. Then $\Tk_\C(\sigma)$ has a unique minimal codeword, say $c_3$. The codeword $c_3$ contains $\sigma$ by definition. On the other hand, it is contained in both $c_1$ and $c_2$. Hence it is contained in their intersection, which is by definition $\sigma$. Thus $c_3=\sigma$ and it follows that $\C$ is intersection complete. 
\end{proof}

\begin{customthm}{\ref{thm:intersectioncomplete}}
The image of an intersection complete code under a morphism is intersection complete. The image of a max-intersection complete code is max-intersection complete. 
\end{customthm}
\begin{proof}[Proof of Theorem \ref{thm:intersectioncomplete}]
Let $f:\C\to\D$ be a surjective morphism of codes. Suppose that $\C$ is intersection complete. By Lemma \ref{lem:intcomplete} every nonempty trunk in $\C$ has a unique minimal element, and it will suffice to prove the same is true of $\D$. Let $T\subseteq \D$ be a nonempty trunk. Then $f^{-1}(T)$ has a unique minimal element. Since morphisms are monotone, the same must be true of $f(f^{-1}(T))$. But $f(f^{-1}(T)) = T$, so $T$ has a unique minimal element. 

To prove the result for max-intersection complete codes, let $\E\subseteq \C$ be the sub-code of $\C$ consisting of maximal codewords in $\C$ and all their intersections. Since $\C$ is max-intersection complete, $\E$ is intersection complete. Thus $f(\E)\subseteq \D$ is intersection complete by the first part of our result. Therefore it suffices to argue that every maximal codeword in $\D$ is contained in $f(\E)$. But since morphisms are monotone, every maximal codeword $d\in \D$ must have a preimage in $\C$ which is maximal. This proves the result.
\end{proof}

We thank the referees for pointing out a concise proof of the following corollary, which we had originally conjectured, and which Caitlin Lienkaemper and Alex Kunin had resolved in a similar manner.

\begin{corollary}\label{cor:intcomplete} $\C$ is intersection complete if and only if it is the image of a simplicial complex. 
\end{corollary}
\begin{proof}
Let $\C\subseteq 2^{[n]}$ be a code. If $\C$ is the image of a simplicial complex, then by Theorem \ref{thm:intersectioncomplete} $\C$ is intersection complete. For the converse, suppose that $\C$ is intersection complete and let $\{c_1,\ldots, c_m\}$ be the elements of $\C$ which are irreducible with respect to intersection (i.e., no $c_i$ is the intersection of two other codewords not equal to $c_i$). Observe that every element of $\C$ can be written as an intersection of the various $c_i$. Then let $\Delta = 2^{[m]}\setminus [m]$, and consider the map $f:\Delta\to \C$ defined by $f(\sigma) = \bigcap_{i\in [m]\setminus \sigma} \sigma_i$. 

Observe that since $[m]\notin \Delta$, the intersection $\bigcap_{i\in [m]\setminus \sigma} \sigma_i$ is not indexed over the empty set, and so $f$ is a well-defined function from $\Delta$ to $\C$. Moreover, $f$ is clearly surjective since every element of $\C$ is a nonempty intersection of various $\sigma_i$. We claim that $f$ is a morphism. To see this, for $j\in[n]$ define $\tau_j = \{i\in[m]\mid j\notin \sigma_i\}$. We claim that $f^{-1}(\Tk_\C(j)) = \Tk_\Delta(\tau_j)$. Indeed, for any face $\sigma\in \Delta$ we see that $f(\sigma)$ contains $j$ if and only if all $\sigma_i$ with $i\notin \sigma$ have $j\in \sigma_i$, which is equivalent to $\tau_j\subseteq \sigma$.  By Proposition \ref{prop:simpletrunks} this proves that $f$ is a morphism.
\end{proof}

The above results show that morphisms respect certain combinatorial properties of codes. These combinatorial properties, such as intersection completeness, have been extremely useful in characterizing convexity of codes and so it is natural to wonder what effects morphisms have on convex codes. The next section analyzes these effects, showing in particular that the image of a convex code is again a convex code.

\section{Morphisms and Convexity}\label{sec:convexity}


The main result of this section is Theorem \ref{thm:imageconvex}, which states that the image of a convex code $\C$ is convex, with minimal embedding dimension no larger than that of $\C$. The crux of the argument is an application of Proposition \ref{prop:morphismfromtrunks}, which allows us to recognize the image of $\C$ as a code recording the intersection patterns of certain convex sub-regions in any convex realization of $\C$ itself. This construction will further show that the realization of the image is non-degenerate in the sense of \cite{openclosed} (defined below) if the original realization was non-degenerate.

\begin{definition}[\cite{openclosed}]\label{def:nondegenerate}
A realization $\U = \{U_1,\ldots, U_n\}$ in $\R^d$ is called \emph{non-degenerate} if \begin{itemize}
\item[(i)] For every $c\in \code(\U, \R^d)$, the region $\bigcap_{i\in c} U_i\setminus \bigcup_{j\notin c} U_j$ is top-dimensional. That is, its intersection with any open ball is either empty, or has non-empty interior.
\item[(ii)] For every nonempty $\sigma\subseteq [n]$, $\bigcap_{i\in\sigma} \partial U_i\subseteq \partial U_\sigma$. 
\end{itemize}
\end{definition}

For convex open sets, \cite{openclosed} showed that (ii) implies (i).

\begin{customthm}{\ref{thm:imageconvex}}
The image of a (non-degenerate) convex code under a morphism is again a (non-degenerate) convex code. The minimal embedding dimension of the image is less than or equal to that of the original code. In particular, convexity and minimal embedding dimension are isomorphism invariants.
\end{customthm}

\begin{proof}[Proof of Theorem \ref{thm:imageconvex}]
Let $\C\subseteq 2^{[n]}$ and $\D\subseteq 2^{[m]}$ be codes, and let $f:\C\to\D$ be a surjective morphism. We will argue that $\D$ is convex with $\mindim(\D)\le \mindim(\C)$. Let $T_1,\ldots, T_m$ be the trunks in $\C$ that define the morphism $f$, as guaranteed by Proposition \ref{prop:morphismfromtrunks}, and let $\{U_1,\ldots, U_n\}$ be a convex realization of $\C$ in a convex open set $X\subseteq \R^d$. 

Each $T_j$ is either empty, or there is some unique largest $\sigma_j\subseteq [n]$ such that $T_j = \Tk_\C(\sigma_j)$. In particular, $\sigma_j$ will be the intersection of all elements of $T_j$. Then, for $j\in [m]$, define \[
V_j = \begin{cases} 
\emptyset & T_j = \emptyset\\
\bigcap_{i\in\sigma_j} U_i & T_j\neq\emptyset.
\end{cases}
\]
Above we adopt the usual convention that the empty intersection is all of $X$. Now, we claim that $\{V_1,\ldots, V_m\}$ is a convex realization of $\D$ in the space $X$. 

Certainly each $V_j$ is convex and open, so it suffices to show that the code $\E$ they realize is in fact $\D$. To see this, first note that we can associate every point $p\in X$ to a codeword in $\C$ or $\E$ by $p\mapsto \{i\in[n]\mid p\in U_i\}$ and  $p\mapsto \{j\in [m]\mid p\in V_j\}$ respectively. Then let $p\in X$ be arbitrary, and let $c$ and $e$ be its associated codewords in $\C$ and $\E$ respectively. Observe that by defintion of the $V_j$, we have that $c\in T_j$ if and only if $j\in e$. But this is equivalent to $e = f(c)$. Since $p\in X$ was arbitrary and every codeword of $\C$ or $\E$ arises from a point, we conclude that $\E = f(\C) = \D$ as desired.  

Observe that if the $U_i$ form a non-degenerate realization of $\C$, then the $V_j$ will form a non-degenerate realization of $f(\C)$. By \cite{openclosed}, we need only check that the $V_j$ satisfy condition (ii) of Definition \ref{def:nondegenerate}. The $V_j$ satisfy (ii) since they are intersections of the various $U_i$, which themselves satisfied (ii).    
\end{proof}

\begin{corollary}
Let $\C$ and $\D$ be isomorphic codes. Then $\C$ is convex if and only if $\D$ is convex. If $\C$ and $\D$ are convex, then they have the same minimal embedding dimension.
\end{corollary}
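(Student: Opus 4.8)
The plan is to derive this entirely from Theorem~\ref{thm:imageconvex} by applying it in both directions along an isomorphism. Recall that an isomorphism $f:\C\to\D$ is by definition a bijective morphism whose set-theoretic inverse $f^{-1}:\D\to\C$ is again a morphism. In particular both $f$ and $f^{-1}$ are \emph{surjective} morphisms, so each one realizes its codomain as the image of its domain: $f(\C)=\D$ and $f^{-1}(\D)=\C$.

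First I would suppose $\C$ is convex. Applying Theorem~\ref{thm:imageconvex} to the surjective morphism $f:\C\to\D$ gives that $\D=f(\C)$ is convex and $\mindim(\D)\le\mindim(\C)$. Symmetrically, supposing $\D$ is convex and applying Theorem~\ref{thm:imageconvex} to the surjective morphism $f^{-1}:\D\to\C$ gives that $\C=f^{-1}(\D)$ is convex and $\mindim(\C)\le\mindim(\D)$. The two implications together establish that $\C$ is convex if and only if $\D$ is. Moreover, when both are convex we have both inequalities $\mindim(\D)\le\mindim(\C)$ and $\mindim(\C)\le\mindim(\D)$, hence $\mindim(\C)=\mindim(\D)$.

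There is no real obstacle here: the only point requiring a moment's care is that Theorem~\ref{thm:imageconvex} is applied through its proof's hypothesis of a surjective morphism, which is automatic since an isomorphism is a bijection onto its codomain. One could also phrase the whole corollary as simply the ``in particular'' clause already asserted in Theorem~\ref{thm:imageconvex}; I would still spell out the two-sided application for clarity.
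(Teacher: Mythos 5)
Your proof is correct and matches the paper's intent exactly: the paper states this corollary without a separate proof, since it is precisely the ``in particular'' clause of Theorem~\ref{thm:imageconvex}, and your two-sided application of that theorem to $f$ and $f^{-1}$ is the intended justification.
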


The following proposition is a result of our work in \cite{thesispaper}, and describes the relevance of trunks to convexity.

\begin{proposition}\label{prop:trunkconvex}
Let $\C\subseteq 2^{[n]}$ be a code, and let $\sigma\subseteq [n]$. If $\C$ is a (non-degenerate) convex code, then so is $\Tk_\C(\sigma)$, and we have $\mindim(\Tk_\C(\sigma))\le \mindim(\C)$. 
\end{proposition}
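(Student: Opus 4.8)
The plan is to take any convex realization of $\C$ and simply restrict it to the region where every neuron of $\sigma$ fires. Concretely, suppose $\{U_1,\dots,U_n\}$ is a convex realization of $\C$ in a convex open set $X\subseteq\R^d$ with $d=\mindim(\C)$, and set $X' \od \bigcap_{i\in\sigma}U_i$, using the convention that the empty intersection is $X$. Since $\sigma$ is finite, $X'$ is a finite intersection of convex open sets (or all of $X$ when $\sigma=\emptyset$), hence convex and open — possibly empty, which causes no trouble below. I would then propose the collection $\{U_1\cap X',\dots,U_n\cap X'\}$, with $X'$ as ambient space, as a convex realization of $\Tk_\C(\sigma)$.

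The main step is to check that $\code(\{U_i\cap X'\}_{i\in[n]},\,X') = \Tk_\C(\sigma)$. Fix $\tau\subseteq[n]$. Using that every $U_i\cap X'$ lies in $X'$ (so that $S\setminus(U_j\cap X')=S\setminus U_j$ for any $S\subseteq X'$), the region associated to $\tau$ simplifies to
\[
\bigcap_{i\in\tau}(U_i\cap X')\setminus\bigcup_{j\notin\tau}(U_j\cap X') \;=\; \Big(\bigcap_{i\in\tau\cup\sigma}U_i\Big)\setminus\bigcup_{j\notin\tau}U_j.
\]
If $\sigma\not\subseteq\tau$, choose $k\in\sigma\setminus\tau$; then $U_k$ appears both in the intersection and in the union, so the region is empty, and correspondingly $\tau\notin\Tk_\C(\sigma)$. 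If $\sigma\subseteq\tau$, then $\tau\cup\sigma=\tau$ and the region is exactly $\bigcap_{i\in\tau}U_i\setminus\bigcup_{j\notin\tau}U_j$, which is nonempty precisely when $\tau\in\C$, i.e. precisely when $\tau\in\Tk_\C(\sigma)$. This gives the claimed equality of codes.

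It then follows immediately that $\Tk_\C(\sigma)$ is convex, being realized by convex open sets in a convex open subset of $\R^d$, and that $\mindim(\Tk_\C(\sigma))\le d=\mindim(\C)$. Two degenerate cases are worth a remark: when $\sigma=\emptyset$ we have $X'=X$ and $\Tk_\C(\sigma)=\C$, so there is nothing to prove; and when $X'=\emptyset$ the same computation shows the realization cuts out the empty code, which is realized (trivially) in the empty — hence convex and open — subset of $\R^d$. I do not expect a genuine obstacle here: the only thing requiring care is the set-algebra simplification that lets the $X'$-factors drop out of the region computation, and the harmless empty-ambient-space case. All of the content is in the choice $X'=\bigcap_{i\in\sigma}U_i$, which forces every surviving point to be active on all of $\sigma$, matching the definition of the trunk on the nose.
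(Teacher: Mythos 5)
Your proof is correct and takes the same route the paper sketches: pass to the convex open set $X'=\bigcap_{i\in\sigma}U_i$ as the new ambient space and realize $\Tk_\C(\sigma)$ by the restricted sets $U_i\cap X'$. The paper defers the region computation to an external reference; you have carried it out in full, and the set-algebra step (dropping the $X'$-factors from the complements because the ambient space is already $X'$) is exactly the detail that makes the argument airtight.
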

\begin{proof}
One can obtain a convex realization of $\Tk_\C(\sigma)$ by starting with a convex realization $\{U_1,\ldots, U_n\}$ of $\C$, and restricting one's attention to only the regions contained in the convex set $\bigcap_{i\in\sigma} U_i$.  For further details see \cite[Corollary 3.7]{thesis}, wherein $\link_\sigma(\C)$ is the same as $\Tk_\C(\sigma)$. As in the proof of Theorem \ref{thm:imageconvex}, non-degenerate convexity of $\Tk_\C(\sigma)$ follows from the fact that its realization is formed using the original realization of $\C$.
\end{proof}

\begin{remark} 
Note that the proofs of Theorem \ref{thm:imageconvex} and Proposition \ref{prop:trunkconvex} follow from the fact that the intersection of convex open sets is again a convex open set. Caitlin Lienkaemper pointed out that this means ``realizability by sets in a family that is closed under intersection" is a property that is preserved under morphisms and trunks. In particular, realizability by closed convex sets, or by a ``good cover," are preserved under morphisms and trunks.
\end{remark}

\begin{example}\label{ex:imageconvex}
Let $\C = \{12, 23, 1, 2, \emptyset\}$. Note $\C$ is convex, with the following realization in $\R^2$:\[
\includegraphics[scale=0.60]{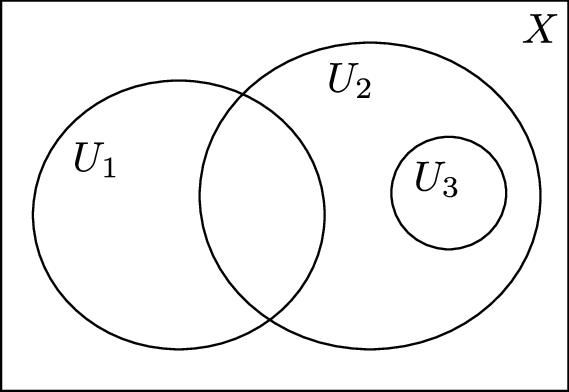}
\] Now let $f:\C\to 2^{[4]}$ be the morphism defined by the trunks \begin{align*}
T_1 &= \Tk_\C(\emptyset) = \{12, 23, 1, 2, \emptyset\},&
T_2 &= \Tk_\C(2) = \{12, 23, 2\},\\
T_3 &= \Tk_\C(1) = \{12, 1\},&
T_4 &= \Tk_\C(\{1, 2\}) = \{12\}.
\end{align*}
That is, $f$ is the map given by \begin{align*}
12&\mapsto1234&&&
23&\mapsto12&&&
1&\mapsto13&&&
2&\mapsto12&&&
\emptyset&\mapsto1
\end{align*}
The image of $\C$ under this map is $\{1234, 12, 13, 1\}$. In the notation of  the proof of Theorem \ref{thm:imageconvex}, we see that $\sigma_1 = \emptyset, \sigma_2=\{2\},\sigma_3=\{1\},$ and $\sigma_4=\{1,2\}$. The proof stipulates that we can achieve a convex realization of $f(\C)$ in by letting $V_j = \bigcap_{i\in\sigma_j}U_i$. Doing so, we do indeed obtain a realization of $f(\C)$ in $X$. This realization is shown below, side-by-side with our original realization of $\C$. In the figure $V_1 = X$, and $V_4 = V_3\cap V_2$.\[
\includegraphics[scale=0.60]{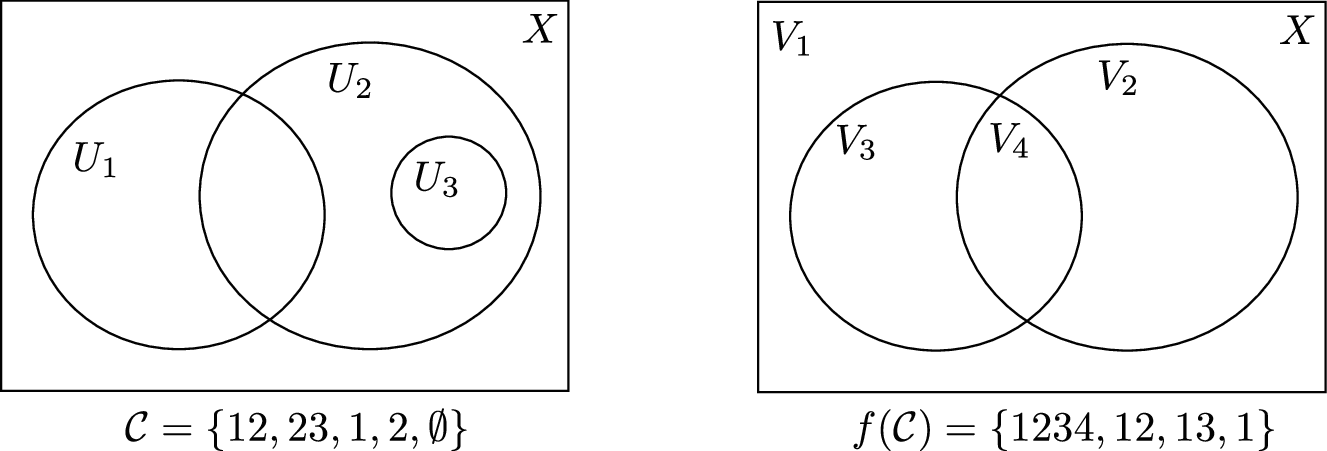}
\]
\end{example}

We next give a notion of product for codes, and show in Theorem \ref{thm:productconvex} that a product of codes is convex if and only if both factors are. Moreover, we describe how the minimum embedding dimension of the product is related to that of the factors. 
Example \ref{ex:productrealization} illustrates the construction used in the proof of Theorem \ref{thm:productconvex}.

\begin{remark}
The notion of product given in Definition \ref{def:product} is a categorical product in $\Code$ when paired with the natural projection maps from $\C\times \D$ to $\C$ and $\D$ respectively. 
\end{remark}

\begin{definition}\label{def:product}
Let $\C\subseteq 2^{[n]}$ and $\D\subseteq 2^{[m]}$ be nonempty codes. Without loss of generality we may regard $\D$ as a code on the set of neurons $\{n+1,n+2,\ldots, n+m\}$. The \emph{product} of $\C$ and $\D$ is the code \[
\C\times \D \od \{ c\cup d \mid c\in \C, d\in \D\}. 
\]
\end{definition}

\begin{theorem}\label{thm:productconvex}
Let $\C\subseteq 2^{[n]}$ and $\D\subseteq 2^{[m]}$ be codes. Then $\C$ and $\D$ are both convex if and only if $\C\times\D$ is convex. When $\C$ and $\D$ are both convex we have $\mindim(\C\times \D)\le \mindim(\C)+\mindim(\D)$. 
\end{theorem}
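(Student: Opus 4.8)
The plan is to prove both directions by explicit constructions of realizations, exploiting the product structure. For the forward direction, suppose $\C$ and $\D$ are both convex. I would take a convex realization $\{U_1,\dots,U_n\}$ of $\C$ in a convex open set $X\subseteq\R^{d_1}$ with $d_1=\mindim(\C)$, and a convex realization $\{W_1,\dots,W_m\}$ of $\D$ in a convex open set $Y\subseteq\R^{d_2}$ with $d_2=\mindim(\D)$. Then work in $X\times Y\subseteq\R^{d_1+d_2}$, which is convex and open, and set $V_i = U_i\times Y$ for $i\in[n]$ and $V_{n+j} = X\times W_j$ for $j\in[m]$. Each $V_i$ is convex and open. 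The key computation is that for a point $(p,q)\in X\times Y$, the set of indices $i\in[n]$ with $(p,q)\in V_i$ equals the set of $i$ with $p\in U_i$ (the $\C$-codeword of $p$), and the set of $j$ with $(p,q)\in V_{n+j}$ equals the $\D$-codeword of $q$; so the codeword of $(p,q)$ is exactly $c\cup d$ where $c$ is the codeword of $p$ and $d$ that of $q$. Since codewords of points in $X\times Y$ are exactly the pairs of codewords from $X$ and $Y$, this realizes precisely $\C\times\D$, giving convexity and the dimension bound $\mindim(\C\times\D)\le\mindim(\C)+\mindim(\D)$.

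For the reverse direction, suppose $\C\times\D$ is convex. I would exhibit $\C$ (and symmetrically $\D$) as the image of $\C\times\D$ under a morphism, and then invoke Theorem \ref{thm:imageconvex}. The natural candidate is the restriction morphism $\pi_{[n]}:\C\times\D\to 2^{[n]}$ from Definition \ref{def:intersectionmorphism}, which sends $c\cup d\mapsto (c\cup d)\cap[n] = c$ (using that $\D$ lives on neurons $\{n+1,\dots,n+m\}$, disjoint from $[n]$, and that $\C,\D$ are nonempty so every $c\in\C$ occurs). Its image is exactly $\C$, so $\C$ is convex by Theorem \ref{thm:imageconvex}; symmetrically, the restriction to neurons $\{n+1,\dots,n+m\}$ shows $\D$ is convex.

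I expect the only real subtlety to be bookkeeping rather than a genuine obstacle: one must check carefully that in the forward construction no ``spurious'' codewords appear (i.e. that the code realized by the $V_i$ is contained in $\C\times\D$, not just that it contains it) — this follows cleanly from the point-by-point codeword identification above, since every point of $X\times Y$ has both coordinates in $X$ and $Y$ respectively. A minor point worth stating explicitly is that $X\times Y$ being open and convex in $\R^{d_1+d_2}$ is exactly what lets us stay within the paper's convention that ambient spaces are open and convex. One should also note at the outset that if either $\C$ or $\D$ is empty the statement is handled trivially (or the product is empty), so the nonemptiness hypothesis in Definition \ref{def:product} is in force and every codeword of a factor genuinely arises.
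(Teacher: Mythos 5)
Your proposal is correct and matches the paper's proof essentially step for step: the product realization $U_i\times Y$, $X\times W_j$ in $X\times Y$ for the forward direction (with the same point-by-point codeword identification giving both containments and the dimension bound), and the restriction morphisms $\pi_{[n]}$ and $\pi_{[n+m]\setminus[n]}$ together with Theorem \ref{thm:imageconvex} for the converse. No gaps to report.
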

\begin{proof}
If $\C\times\D$ is convex, then so are $\C$ and $\D$ by Theorem \ref{thm:imageconvex} since they are each the image of $\C\times\D$ under the restriction maps $\pi_{[n]}$ and $\pi_{[n+m]\setminus[n]}$ respectively. For the converse, suppose that $\C$ and $\D$ are both convex, say with convex realizations $\{U_1,\ldots, U_n\}$ and $\{V_1,\ldots, V_m\}$ in spaces $X_1\subseteq \R^{d_1}$ and $X_2\subseteq \R^{d_2}$ respectively. Define $X = X_1\times X_2\subseteq \R^{d_1+d_2}$, and for $j\in [n+m]$ define\[
W_j = \begin{cases}
U_j\times X_2&j\in[n],\\
X_1\times V_j&j\in[n+m]\setminus[n].
\end{cases}
\]
Observe that $X$ and all $W_j$ are convex and open since they are products of convex open sets. We claim that $\{W_1,\ldots, W_{n+m}\}$ is a realization of $\C\times \D$ in the space $X$. 

To see this, let $\E$ be the code realized by the $W_j$, and fix $v\in 2^{[n+m]}$. Then let $p\in X$ be any point, and let $\pi_1:X\to X_1$ and $\pi_2:X\to X_2$ denote the projection maps from $X$ to $X_1$ and $X_2$. Observe that by construction of the $W_j$, we have for $j\in[n]$ that $p\in W_j$ if and only if $\pi_1(p)\in U_j$. Likewise, we have for $j\in [n+m]\setminus[n]$ that $p\in W_j$ if and only if $\pi_2(p)\in V_j$. We conclude that $v\in \E$ if and only if $v\cap [n]\in \C$ and $v\cap([n+m]\setminus[n])\in \D$. Equivalently, $v\in \E$ if and only if $v = c\cup d$ for $c\in \C$ and $d\in \D$. But from Definition \ref{def:product} this is equivalent to $v\in \C\times\D$, so $\E = \C\times\D$ as desired. In this construction we have realized $\C\times \D$ in a space whose dimension is the sum of dimensions of the respective realizations of $\C$ and $\D$, and so $\mindim(\C\times \D)\le \mindim(\C)+\mindim(\D)$, concluding the proof.
\end{proof}

\begin{example}\label{ex:productrealization}
Consider the two codes $\C = \{12, 1,2,\emptyset\}$ and $\D = \{12, 1,\emptyset\}$. These have convex realizations $\{U_1,U_2\}$ and $\{V_1,V_2\}$ respectively in $\R^1$ pictured below. In the figure below we have separated the intervals from the real line for clarity.\[
\includegraphics[scale=0.55]{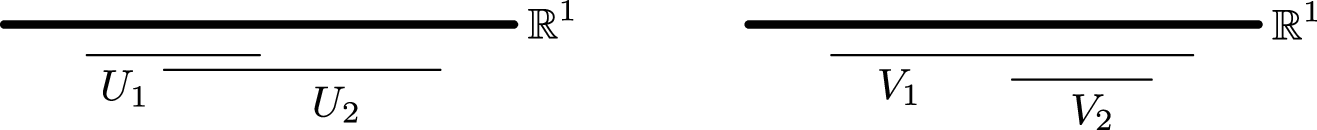}
\] 
Then $\C\times \D = \{1234, 123, 12, 134, 13,1, 234, 23, 2, 34, 3, \emptyset\}$. Using the construction of Theorem \ref{thm:productconvex} we obtain a convex realization of this product in $\R^2$ as pictured below:\[
\includegraphics[scale=0.55]{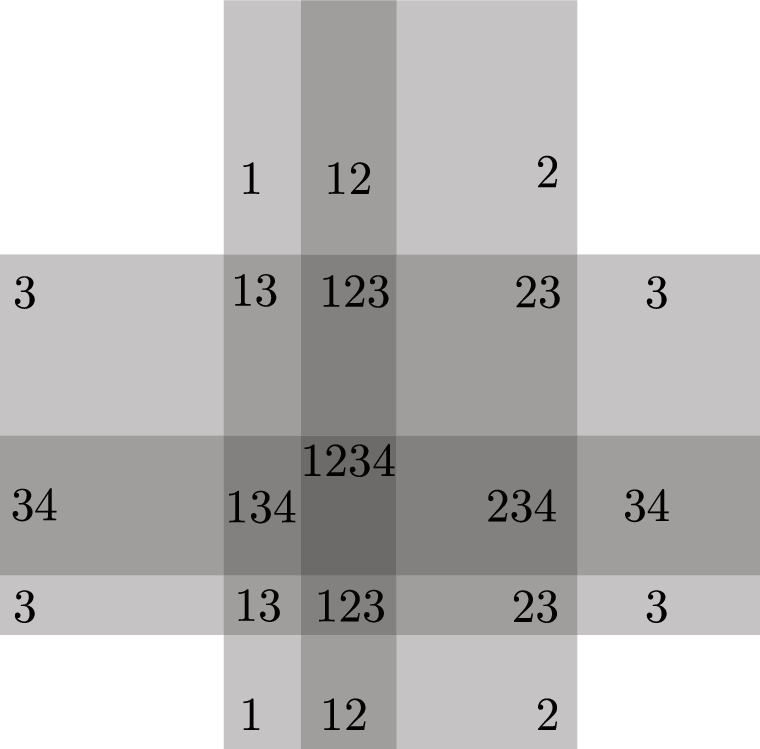}
\]
In the figure above there are four sets, two of which are infinite vertical strips, and two of which are infinite horizontal vertical strips. 
\end{example}

The relationship between morphisms and convexity, together with the fact that morphisms encode a rich variety of operations on codes, suggests that perhaps we can use morphisms to reduce the problem of classifying \emph{all} convex codes to the problem of classifying a certain subset of them. In other words, we might hope that morphisms give us a way to define certain ``minimal" obstructions to convexity, yielding a path to characterizing convex  codes by examining a limited and simpler structure. These hopes are the topic of Section \ref{sec:framework}, in which we introduce a partial order on the collection of all  codes, and show how this partial order allows us to isolate minimal obstructions to convexity. 

\section{Minimally Non-Convex Codes: A New Framework for Investigating Convexity}\label{sec:framework}
Section \ref{sec:convexity} shows that morphisms have strong relevance to convexity. In particular, Theorem \ref{thm:imageconvex} tells us that the image of a convex code is always convex, and Proposition \ref{prop:trunkconvex} shows that trunks in a convex code are always convex. If we think of an image code as recording a certain portion of the structure of the original code, these facts echo the process of taking minors of a graph in the context of planarity. Taking a minor of a planar graph always yields a planar graph, and taking an image or a trunk of a convex code always yields a convex code. With this in mind we present the following. 

\begin{definition}\label{def:operation}\label{def:partialorder} An \emph{operation} on a code $\C$ refers to either taking the image of $\C$ under some morphism, or replacing $\C$ by one of its trunks. For isomorphism classes of codes $[\C]$ and $[\D]$ we will say that $[\D]\le[\C]$ if there is a series of operations taking $\C$ to $\D$.
\end{definition}

\begin{proposition}\label{prop:partialorder}
The relation $\le$ is a partial order on isomorphism classes in $\Code$.
\end{proposition}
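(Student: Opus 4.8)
The plan is to verify the three axioms of a partial order—reflexivity, antisymmetry, and transitivity—on the set of isomorphism classes of codes, using the two types of operations (passing to an image under a morphism, or passing to a trunk) as the generators of the relation $\le$. Transitivity is immediate from the definition: if $[\D] \le [\C]$ via some series of operations and $[\E] \le [\D]$ via another, then concatenating the two series exhibits $[\E] \le [\C]$. Reflexivity is also essentially free: the identity map $\C \to \C$ is a morphism (its image is $\C$), so $[\C] \le [\C]$; alternatively one notes that $\Tk_\C(\emptyset) = \C$ is a trunk. The content of the proposition is therefore antisymmetry.

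For antisymmetry I would argue via a monotone numerical invariant that cannot strictly decrease under either operation but must decrease at some point in any "nontrivial" cycle. The natural candidate is the cardinality $|\C|$ of the code. Both operations are non-increasing on cardinality: the image $f(\C)$ of $\C$ under a morphism satisfies $|f(\C)| \le |\C|$ since $f$ is a surjection onto its image, and a trunk $\Tk_\C(\sigma) \subseteq \C$ obviously has $|\Tk_\C(\sigma)| \le |\C|$. Isomorphic codes have the same cardinality, so $|\cdot|$ is well-defined on isomorphism classes and is monotone with respect to $\le$. Hence if $[\D] \le [\C]$ and $[\C] \le [\D]$, then $|\C| = |\D|$, and moreover every operation in the series realizing $[\D] \le [\C]$ (and likewise $[\C]\le[\D]$) must preserve cardinality exactly.

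The remaining step is to show that a cardinality-preserving operation is an isomorphism onto its target, so that a cardinality-preserving chain composes to an isomorphism and forces $[\C] = [\D]$. A cardinality-preserving trunk $\Tk_\C(\sigma)$ must equal all of $\C$, which is the identity operation. A cardinality-preserving morphism $f\colon \C \to f(\C)$ is a bijection; the point to check is that a bijective morphism whose domain and codomain have the same size is in fact an \emph{isomorphism}—but this is \emph{false} in general, as Example \ref{ex:first} shows (the bijective morphism $\C \to \D$ there has no morphism inverse). So cardinality alone does not suffice, and this is the main obstacle. The fix is to use instead the refined invariant from Theorem \ref{thm:nicerepresentative}: pass to reduced representatives and track the pair (minimum neuron number, cardinality), or more robustly, observe that a morphism induces an injection on trunks only in one direction, so the \emph{number of trunks} is also non-increasing under images (a morphism pulls back distinct trunks of the codomain to distinct trunks of the domain, since $f$ is surjective onto its image), and strictly decreases in Example \ref{ex:first}. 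I would therefore run the antisymmetry argument with the pair $\bigl(|\C|,\ \#\{\text{trunks in }\C\}\bigr)$ ordered lexicographically (or just with the number of trunks): both coordinates are monotone under both operations, and a cardinality-and-trunk-count-preserving morphism is genuinely an isomorphism because it induces a bijection on trunks, which is exactly the condition used throughout Section \ref{sec:combinatorics} to recognize isomorphisms. Composing such isomorphisms around the cycle yields $[\C] = [\D]$, completing the proof.
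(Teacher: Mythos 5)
Your proposal is correct and follows essentially the same strategy as the paper: reflexivity and transitivity are immediate, and antisymmetry is handled by observing that both the number of codewords and the number of trunks are non-increasing under the two generating operations, so a cycle forces every step to preserve both invariants, which in turn forces each morphism to be a bijection on trunks and hence an isomorphism. Your discussion of why cardinality alone is insufficient (citing Example~\ref{ex:first}) and the ensuing refinement to track trunk count recapitulates, a bit more explicitly, exactly the chain of reasoning the paper compresses into the observation that ``the image of a code always has no more trunks than the domain.''
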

\begin{proof}
First note that the relation $\le$ is unaffected by which representative we choose for an isomorphism class, since all representatives are the images of one another under a morphism. We must show that the relation $\le$ is reflexive, transitive, and antisymmetric. Reflexivity is immediate, and transitivity follows by concatentating series of operations.

For antisymmetry, suppose that $\C\le \D$ and $\D\le \C$. Note that the series of operations taking $\C$ to $\D$ and vice versa cannot involve taking any proper trunks, lest we decrease the number of codewords that we have. Thus this series of operations consists of taking successive images of $\C$ under morphisms to reach $\D$ and vice versa. Composing these we get surjective maps $\C\to\D\to\C$. Noting that the image of a code always has no more trunks than the domain, we conclude that these maps are bijections on trunks. Hence they are isomorphisms, and so $\C$ and $\D$ are isomorphic as desired.
\end{proof}

\begin{definition}\label{def:poset}
Let $\ParCode$ denote the set of all isomorphism classes of codes, partially ordered via the relation described in Definition \ref{def:partialorder}. 
\end{definition}

Theorem \ref{thm:imageconvex} and Proposition \ref{prop:trunkconvex} imply the following proposition, motivating the definition of minimally non-convex codes which follows.\begin{proposition}
The set of convex isomorphism classes is a down-set in $\ParCode$. 
\end{proposition}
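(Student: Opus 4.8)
The plan is to show that the set of convex isomorphism classes is closed downward in $\ParCode$, which by Definition \ref{def:partialorder} means: if $[\C]$ is a convex isomorphism class and $[\D] \le [\C]$, then $[\D]$ is also convex. First I would unwind the definition of $\le$: the relation $[\D] \le [\C]$ means there is a finite series of operations taking some representative of $[\C]$ to some representative of $[\D]$, where each operation is either passing to the image under a morphism or passing to a trunk. By transitivity (or simply by induction on the length of the series), it suffices to handle a single operation: I would show that if $\C$ is convex and $\C'$ is obtained from $\C$ by one operation, then $\C'$ is convex.

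The single-operation case splits into exactly two subcases, each of which is already established earlier in the excerpt. If $\C'$ is the image of $\C$ under a morphism, then $\C'$ is convex by Theorem \ref{thm:imageconvex} (indeed $\mindim(\C') \le \mindim(\C)$). If instead $\C'$ is a trunk in $\C$, i.e. $\C' = \Tk_\C(\sigma)$ for some $\sigma$, then $\C'$ is convex by Proposition \ref{prop:trunkconvex} (again with $\mindim(\C') \le \mindim(\C)$). Since convexity is an isomorphism invariant (also by Theorem \ref{thm:imageconvex}, or the corollary immediately following it), the property of being convex is well-defined on isomorphism classes, so there is no ambiguity in which representatives we pick along the series of operations.

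Assembling these, the full argument is a short induction: given $[\D] \le [\C]$ with $[\C]$ convex, write out a witnessing sequence $\C = \C_0, \C_1, \ldots, \C_k$ with $\C_k$ a representative of $[\D]$ and each $\C_{i+1}$ obtained from $\C_i$ by one operation. Convexity of $\C_0$ is given; convexity of $\C_{i+1}$ follows from convexity of $\C_i$ by the appropriate one of the two subcases above. Hence $\C_k$, and therefore $[\D]$, is convex. This shows the convex classes form a down-set.

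There is essentially no obstacle here: the proposition is a direct bookkeeping consequence of the two substantive results already proved (Theorem \ref{thm:imageconvex} and Proposition \ref{prop:trunkconvex}). The only point requiring a moment's care is the one already flagged above, namely that convexity does not depend on the choice of isomorphism-class representative, so that the inductive step is legitimately phrased at the level of codes rather than classes; this too is handed to us by Theorem \ref{thm:imageconvex}. If one wanted to be thorough one could also remark that the empty series of operations handles reflexivity, so the statement is trivially true when $[\D] = [\C]$.
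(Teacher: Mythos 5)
Your argument is correct and matches the paper's reasoning exactly: the paper simply remarks that the proposition follows from Theorem \ref{thm:imageconvex} and Proposition \ref{prop:trunkconvex}, which is precisely the bookkeeping induction on a witnessing chain of operations that you spell out.
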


\begin{definition}\label{def:mnc}
A code $\C$ is \emph{minimally non-convex} if $\C$ is not convex, but all images of $\C$ other than itself are convex and all proper trunks in $\C$ are convex. Equivalently, $\C$ is minimally non-convex if $[\C]$ is a minimal element of the subposet of $\ParCode$ consisting of non-convex isomorphism classes.
\end{definition}

Observe that a code is non-convex if and only if there is a series of operations taking it to a minimally non-convex code. Thus it would be enough to characterize minimally non-convex codes in order to describe a complete test for convexity of arbitrary codes. This is useful for two reasons. First, the set of minimally non-convex codes is a significantly smaller set to investigate than all convex codes or all non-convex codes. Second, these minimally non-convex codes have extra structure, since we know that all their non-isomorphic images and trunks are convex. This extra stucture could prove useful to investigating and characterizing minimally non-convex codes.

What can we say about the structure of $\ParCode$ as a poset? The Graph Minor Theorem \cite{graphminor} states that the poset of finite graphs ordered by minors has no infinite antichains. We will see that the analogous result does not hold for $\ParCode$. In particular, Proposition \ref{prop:infiniteantichain} will show that there are infinitely many incomparable minimally non-convex codes. However, $\ParCode$ may have other properties which are useful to the problem of characterizing convex codes. 

We next give examples of minimally non-convex codes. Proposition \ref{prop:infiniteantichain} describes a family of minimally non-convex codes, and Theorem \ref{thm:nolocalobs} describes a minimally non-convex code which does not have any local obstructions. For Proposition \ref{prop:infiniteantichain} we first recall some definitions and results from \cite{local15} and \cite{undecidability}. These results use several structures related to simplicial complexes, such as links and collapsibility. For a detailed presentation of these concepts see Section 1.2 of \cite{local15} as well as \cite{undecidability} Section 2.1 and Definition 5.1. 

\begin{definition}[\cite{local15}]
Let $\C\subseteq 2^{[n]}$ be a code, and $\sigma\in\Delta(\C)$. Then $\C$ has a \emph{local obstruction} at $\sigma$ if $\sigma\notin \C$, and $\link_{\Delta(\C)}(\sigma)$ is not contractible. If $\C$ has no local obstructions then $\C$ is called \emph{locally good}. 
\end{definition}

\begin{definition}[\cite{undecidability}]\label{def:localobs2}
Let $\C\subseteq 2^{[n]}$ be a code, and $\sigma\in\Delta(\C)$. Then $\C$ has a \emph{local obstruction of the second kind} at $\sigma$ if $\sigma\notin \C$, and $\link_{\Delta(\C)}(\sigma)$ is not collapsible. If $\C$ has no local obstructions of the second kind then $\C$ is called \emph{locally great}. 
\end{definition}

Note that a local obstruction is also a local obstruction of the second kind. Thus locally great codes are locally good. The results of \cite{local15} and \cite{undecidability} imply that convex codes are locally great. These results allow us to prove the following proposition.

\begin{proposition}\label{prop:infiniteantichain}
Let $\Delta$ be any non-collapsible simplicial complex, and let $\C = \Delta\setminus \{\emptyset\}$. Then $\C$ is minimally non-convex.
\end{proposition}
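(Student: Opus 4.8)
The plan is to verify the three requirements in the definition of minimally non-convex for $\C=\Delta\setminus\{\emptyset\}$: that $\C$ is not convex, that every proper trunk of $\C$ is convex, and that every image of $\C$ not isomorphic to $\C$ is convex. For non-convexity: since $\Delta$ is downclosed, deleting $\emptyset$ does not change the downclosure, so $\Delta(\C)=\Delta$; then $\emptyset\notin\C$ while $\link_{\Delta(\C)}(\emptyset)=\Delta$ is non-collapsible by hypothesis, so $\C$ has a local obstruction of the second kind at $\emptyset$, and hence is not convex because convex codes are locally great by the results of \cite{undecidability}. (This is exactly where we use the convention that the ambient space of a realization is convex, which is what allows $\emptyset$ to count as a local obstruction.) For proper trunks: a proper trunk of $\C$ is either the empty code, which is vacuously convex, or of the form $\Tk_\C(\sigma)$ with $\sigma\neq\emptyset$ (the case $\sigma=\emptyset$ gives all of $\C$); in the latter case $\Tk_\C(\sigma)=\{c\in\Delta:\sigma\subseteq c\}$ is closed under intersection because $\Delta$ is downclosed, so it is intersection complete and therefore convex by \cite{chadvlad}.

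The substantive part is the image condition. Let $f:\C\to\D$ be a surjective morphism with $\D\not\cong\C$. By Theorem \ref{thm:nicerepresentative} we may assume $\D$ is reduced, since the reduction map is an isomorphism and composing it with $f$ keeps $\D$ an image of $\C$ and preserves convexity. Write $f$ via trunks $T_1,\dots,T_k$ in $\C$ as in Proposition \ref{prop:morphismfromtrunks}; by Lemmas \ref{lem:imagetrivial} and \ref{lem:imageredundant}, reducedness forces each $T_j$ to be a nonempty proper trunk, say $T_j=\Tk_\C(\tau_j)$ with $\tau_j\in\Delta\setminus\{\emptyset\}$. I would then split on whether $\emptyset\in\D$. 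If $\emptyset\in\D$, then because each $\tau_j\neq\emptyset$, each $T_j$ equals $\Tk_\Delta(\tau_j)$, a trunk in $\Delta$, so extending $f$ to $\tilde f:\Delta\to\D$ by $\tilde f(\emptyset)=\emptyset$ gives a morphism (Proposition \ref{prop:simpletrunks}) that is surjective; hence $\D$ is the image of the simplicial complex $\Delta$, which is convex, so $\D$ is convex by Theorem \ref{thm:imageconvex}. If $\emptyset\notin\D$, then for each vertex $v$ of $\Delta$ we have $f(\{v\})\neq\emptyset$, which forces $\tau_{j}=\{v\}$ for some index $j=j(v)$; this makes $f$ injective (since $v\in c\iff j(v)\in f(c)$ recovers $c$ from $f(c)$), hence bijective, and injectivity gives $f(\Tk_\C(\tau))=f\bigl(\bigcap_{v\in\tau}\Tk_\C(v)\bigr)=\bigcap_{v\in\tau}\Tk_\D(j(v))$, a trunk in $\D$, for every $\tau$. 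Thus $f$ carries trunks to trunks, so $f^{-1}$ is a morphism and $f$ is an isomorphism, contradicting $\D\not\cong\C$; so this case does not occur, and $\D$ is convex.

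The main obstacle is this image condition, and its crux is the dichotomy in the second paragraph: once $\D$ is normalized to be reduced, either the empty codeword reappears in $\D$, in which case $\D$ is an honest image of the convex complex $\Delta$, or it does not, in which case the requirement that every vertex be recorded pins the morphism down to an isomorphism. The remaining work — checking that passing to a reduced $\D$ costs us neither the property of being an image of $\C$ nor the conclusion about convexity, and checking that $f$ does carry the empty and full trunks to trunks in the second case — is routine but worth spelling out.
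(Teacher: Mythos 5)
Your proof is correct and tracks the paper's argument closely; the only real difference is in how the image step is organized. Where you split into cases on whether $\emptyset\in\D$ — lifting $f$ to $\Delta$ in the first case, and showing the vertex-recording constraint forces $f$ to be an isomorphism in the second — the paper runs the contrapositive of your second case up front: it observes that since $f$ is not an isomorphism some irreducible (hence simple) trunk $\Tk_\C(i)$ is omitted from $\{T_j\}$, which immediately yields $f(\{i\})=\emptyset$ and hence $\emptyset\in\D$, after which both arguments lift to $\overline f:\Delta\to\D$ identically. Your dichotomy makes the logical structure a bit more explicit but is otherwise the same argument; your proper-trunk verification (intersection complete since $\Delta$ is downclosed and $\sigma\neq\emptyset$) is also a mild variant of the paper's appeal to Proposition \ref{prop:trunkconvex}.
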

\begin{proof} First note that $\C$ is not convex by \cite{undecidability}, since it has a local obstruction of the second kind at $\emptyset$. Next observe that all the proper trunks of $\C$ are convex since they are equal to trunks in $\Delta$, and $\Delta$ is convex since it is max-intersection complete. It remains to show that any non-isomorphic image of $\C$ is convex. For this, let $f:\C\to\D$ be a surjective morphism that is not an isomorphism. Let $T_1,\ldots, T_m$ be the trunks defining $f$, as guaranteed by Proposition \ref{prop:morphismfromtrunks}. We may assume that $\D$ is reduced, so that all $T_j$ are proper trunks. Not every irreducible trunk in $\C$ can be equal to some $T_j$, lest $f$ induce a bijection on trunks, and so there must be some irreducible (hence simple) trunk $\Tk_\C(i)$ which is not equal to any $T_j$. But this implies $f(i) = \emptyset$. Then, let $\overline f:\Delta\to \D$ be the morphism defined by regarding the $T_j$ as trunks in $\Delta$. We see that $\overline f(\Delta) = f(\C) = \D$, so that $\D$ is the image of a convex code, and hence convex. 
\end{proof}

One might protest that Proposition \ref{prop:infiniteantichain} is an unnatural example of minimally non-convex codes, since in broader literature it is often assumed that the empty set is an element of every code, and adding the empty set ``fixes" the non-convexity in this example. However, if we stipulate that all our codes contain $\emptyset$ then obstructions of the type above still arise, but require more neurons to write down. For example, the code $\{23, 13,\emptyset\}$ would be minimally non-convex if we required the presence of the empty set. In our sense this code is \emph{not} minimally non-convex, since the trunk of 3 is isomorphic to $\{1,2\}$, which is not convex. An amended framework would thus capture the same phenomenon, but in a less general manner, and would require slightly larger codes to describe the obstruction. This is one reason why we allow for codes that do not contain $\emptyset$. 

All the codes described in Proposition \ref{prop:infiniteantichain} have local obstructions. In the following example we describe a code which is minimally non-convex, but has no local obstructions. We first state a lemma of \cite{obstructions}.

\begin{lemma}[\cite{obstructions}]\label{lem:obstructions}
Let $U_1,U_2,$ and $U_3$ be convex open sets in $\R^d$ such that $U_1\cap U_2 = U_1\cap U_3 = U_2\cap U_3\neq \emptyset$. Any line segment that intersects each of the $U_i$'s must intersect $U_1\cap U_2\cap U_3$. 
\end{lemma}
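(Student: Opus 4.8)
The plan is to argue by contradiction: suppose the line segment $\ell$ meets each of $U_1,U_2,U_3$ but misses $W:=U_1\cap U_2\cap U_3$. The first routine observation is that the three pairwise intersections being equal forces $W$ to equal each of them: setting $W=U_1\cap U_2$, we have $W=U_1\cap U_3\subseteq U_3$, so $W=W\cap U_3=U_1\cap U_2\cap U_3$. Thus $W$ is a nonempty open convex set and $W=U_i\cap U_j$ for all $i\neq j$.

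Next I would set up the combinatorial skeleton on $\ell$. Since $\ell\cap U_i\cap U_j=\ell\cap W=\emptyset$, the three nonempty intervals $\ell\cap U_1,\ell\cap U_2,\ell\cap U_3$ are pairwise disjoint, hence linearly ordered along $\ell$; relabeling (the statement is symmetric in the three sets), assume $\ell\cap U_2$ is the middle one. Pick $y_i\in\ell\cap U_i$ and $w\in W$; then $y_2$ lies between $y_1$ and $y_3$ on $\ell$, $w\notin\ell$, and $y_2\notin U_1\cup U_3$. Now run an intermediate–value argument along the ``cevian'' $[y_2,w]$, which lies in $U_2$ by convexity. Parametrize it as $g:[0,1]\to[y_2,w]$ with $g(0)=y_2$, $g(1)=w$; since $g^{-1}(U_1)$ and $g^{-1}(U_3)$ are relatively open subintervals of $[0,1]$ containing $1$ but not $0$, write them as $(s_1,1]$ and $(s_3,1]$, so that $g(s_1)\in\partial U_1$ and $g(s_3)\in\partial U_3$. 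If $s_1\neq s_3$, say $s_1>s_3$, then $g(s_1)\in U_2\cap U_3=W\subseteq U_1$, contradicting $g(s_1)\in\partial U_1$ (an open set misses its boundary). Hence $s_1=s_3=:s$, and $z:=g(s)\in\partial U_1\cap\partial U_3\cap U_2$, with $g((s,1])\subseteq W$, so also $z\in\overline{W}$.

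The key geometric step is to produce a supporting hyperplane at $z$ that separates $y_2$ from $y_1$ and $y_3$. Because $z\in U_2$ and $U_2$ is open, there is a neighborhood $N$ of $z$ with $N\subseteq U_2$; then $N\cap U_1=N\cap(U_1\cap U_2)=N\cap W$ and likewise $N\cap U_3=N\cap W$, so $U_1$, $U_3$, and $W$ all coincide near $z$. In particular $z\in\partial W$, so $W$ has a supporting hyperplane $H$ at $z$ with $W$ in the open half-space $H^+$. A short convexity argument upgrades this: if $U_1$ had a point $q\in H^-$, the segment $[q,z)$ would lie in $U_1$ (as $z\in\overline{U_1}$) and in $H^-$ (linearity of the defining functional), yet it enters $N$ near $z$, contradicting $N\cap U_1=N\cap W\subseteq H^+$; hence $U_1\subseteq H^+$, and symmetrically $U_3\subseteq H^+$. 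Finally $w\in W\subseteq H^+$, while $z\in H$ lies between $y_2$ and $w$ on the cevian, which puts $y_2$ in $\overline{H^-}$; but $y_1\in U_1\subseteq H^+$ and $y_3\in U_3\subseteq H^+$, and $H^+$ is convex, so $[y_1,y_3]\subseteq H^+$ cannot contain $y_2$. This contradiction finishes the proof.

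I expect the delicate point to be the third paragraph — extracting, from the degenerate case $s_1=s_3$, that $U_1$ and $U_3$ genuinely agree in a neighborhood of $z$ and that the resulting supporting hyperplane controls all of $U_1$ and $U_3$, not merely their germs at $z$. The relabeling and the intermediate–value step are routine bookkeeping; the real content is recognizing that $z$ being an interior point of $U_2$ collapses the local pictures of $U_1$, $U_3$, and $W$, after which the betweenness of $y_2$ does the rest.
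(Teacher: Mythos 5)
The paper states this lemma with a citation to \cite{obstructions} and does not reproduce a proof, so there is no in-paper argument to compare against; I can only assess your proof on its own terms, and it is correct and complete. The preliminary reductions are fine: the hypothesis forces $W=U_i\cap U_j$ for every pair, the three traces $\ell\cap U_i$ are nonempty, pairwise disjoint, hence linearly ordered, and relabelling puts $\ell\cap U_2$ in the middle. The intermediate-value step along the cevian $[y_2,w]\subseteq U_2$ is sound: if the exit times $s_1,s_3$ differed, say $s_1>s_3$, then $g(s_1)\in U_2\cap U_3=W\subseteq U_1$ while $g(s_1)\in\partial U_1$, impossible since $U_1$ is open; hence $s_1=s_3=s$ and $z:=g(s)\in\partial U_1\cap\partial U_3\cap U_2$ with $g((s,1])\subseteq W$, so $z\in\partial W$.

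The step you flagged as delicate is handled correctly. Because $z$ is interior to $U_2$, a neighborhood $N\subseteq U_2$ of $z$ satisfies $N\cap U_1=N\cap W=N\cap U_3$, and the supporting hyperplane $H$ for $W$ at $z$ (with $W\subseteq H^+$, the strict inclusion holding because $W$ is open) then controls $U_1$ globally by the standard half-open-segment argument: for $q\in U_1$ one has $[q,z)\subseteq U_1$ since $z\in\overline{U_1}$, so if $q$ lay in $H^-$ then $[q,z)\subseteq H^-$ would, near $z$, produce points of $N\cap U_1=N\cap W\subseteq H^+$, a contradiction; hence $U_1\subseteq H^+$, and symmetrically $U_3\subseteq H^+$. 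The closing contradiction is also valid for the boundary case $s=0$: one gets $h(y_2)=-\tfrac{s}{1-s}h(w)\le 0$ so $y_2\in\overline{H^-}$, while $y_2\in\conv\{y_1,y_3\}\subseteq H^+$. One small remark: you never need to worry about the degenerate collinear configuration where $w$ lies on the line through $\ell$, since that would already place some $y_i$ in $W$ and contradict $\ell\cap W=\emptyset$ before the hyperplane enters.
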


\begin{theorem}\label{thm:nolocalobs}
The code $\C_0 = \{3456, 123, 145, 256, 45, 56, 1,2,3,\emptyset\}$ is minimally non-convex, and has no local obstructions of the first or second kind.
\end{theorem}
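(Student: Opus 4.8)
The plan is to establish four facts: (a) $\C_0$ is not convex; (b) $\C_0$ has no local obstructions of the first or second kind; (c) every proper trunk of $\C_0$ is convex; and (d) every image of $\C_0$ under a morphism that is not an isomorphism is convex. Facts (c) and (d), via Theorem \ref{thm:imageconvex} and Proposition \ref{prop:trunkconvex}, say precisely that every isomorphism class strictly below $[\C_0]$ in $\ParCode$ is convex, so together with (a) they give minimal non-convexity, and (b) is the additional assertion. For (a) I would argue straight from Lemma \ref{lem:obstructions}. Suppose $\{U_1,\dots,U_6\}$ is a convex open realization of $\C_0$ in $X\subseteq\R^d$, and for a codeword $\sigma$ let $A_\sigma\subseteq X$ be the nonempty region it cuts out. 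Since $123$ is the only codeword of $\C_0$ containing two of the neurons $1,2,3$, we have $U_1\cap U_2=U_1\cap U_3=U_2\cap U_3=A_{123}\neq\emptyset$, so by Lemma \ref{lem:obstructions} any line segment meeting each of $U_1,U_2,U_3$ meets $A_{123}$. On the other hand, every codeword of $\C_0$ containing $4$ or $6$ also contains $5$, so $U_4,U_6\subseteq U_5$; one checks $U_4\cup U_6=U_5$ (every codeword with $5$ has $4$ or $6$) and $U_4\cap U_6=A_{3456}$ (only $3456$ has both $4$ and $6$). Pick $p\in A_{145}$ and $q\in A_{256}$; both lie in $U_5$, so $[p,q]\subseteq U_5=U_4\cup U_6$, and since $p\in U_4$ while $q\notin U_4$, connectedness of $[p,q]$ (covered by the relatively open sets $[p,q]\cap U_4$ and $[p,q]\cap U_6$) forces a point $r\in[p,q]\cap U_4\cap U_6=[p,q]\cap A_{3456}$. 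Then $[p,q]$ meets $U_1$ at $p$, $U_2$ at $q$, and $U_3$ at $r$, hence meets $A_{123}$ — contradicting $[p,q]\subseteq U_5$ and $A_{123}\cap U_5=\emptyset$.

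For (b), $\Delta(\C_0)$ has facets $3456,123,145,256$, and its faces not lying in $\C_0$ are the vertices $4,5,6$, the edges $12,13,14,15,23,25,26,34,35,36,46$, and the triangles $345,346,356,456$. A direct inspection shows that each link $\link_{\Delta(\C_0)}(\sigma)$ of such a $\sigma$ is either a single vertex, a single edge, or (for $\sigma\in\{4,5,6\}$) a $2$-simplex with one or two pendant edges; every one of these is collapsible, hence contractible. Since $\emptyset\in\C_0$ it is not among the faces to check. Thus $\C_0$ has no local obstructions of either kind.

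For (c) I would list the proper trunks of $\C_0$: the six simple trunks $\Tk(1),\dots,\Tk(6)$, the singleton trunks $\{123\},\{145\},\{256\},\{3456\}$, and $\emptyset$. Each of $\Tk(1),\Tk(2),\Tk(3),\Tk(4),\Tk(6)$ has exactly two maximal codewords whose intersection is again a codeword of the trunk, so it is max-intersection complete and hence convex by \cite{chadvlad}; the singleton trunks and $\emptyset$ are trivially convex. The remaining trunk $\Tk(5)=\{3456,145,256,45,56\}$ is not max-intersection complete, but neuron $5$ lies in every one of its codewords, so deleting it identifies $\Tk(5)$ with a product $\C''\times\D$ where $\D$ is a one-codeword code and $\C''=\{346,14,26,4,6\}$; since $\C''$ admits a realization in $\R^1$ (a triangle region with two pendant arms), Theorem \ref{thm:productconvex} gives convexity of $\Tk(5)$.

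The bulk of the argument is (d), and this is the step I expect to be the main obstacle. Let $f:\C_0\to\D$ be a surjective morphism that is not an isomorphism; replacing $\D$ by its reduced representative (Theorem \ref{thm:nicerepresentative}) we may assume $\D$ is reduced, and then $\D\not\cong\C_0$ since $\C_0$ is itself reduced. By Proposition \ref{prop:morphismfromtrunks}, $f$ is given by a set of trunks $\{T_1,\dots,T_m\}$, and by Lemmas \ref{lem:imagetrivial} and \ref{lem:imageredundant} reducedness of $\D$ forces these to be distinct proper nonempty trunks of $\C_0$ with no one of them an intersection of the others; the proper nonempty trunks are the six simple trunks together with $\{123\},\{145\},\{256\},\{3456\}$, and since the six simple trunks are exactly the irreducible trunks of $\C_0$ (Theorem \ref{thm:reducedirredtrunks}), if all six appeared among the $T_j$ no further trunk could be added, so $f$ would be the isomorphism onto the reduced representative — contradiction. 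Hence some irreducible trunk $\Tk(i_0)$ is omitted. The key device is to extend $f$ across the max-intersection complete, hence convex, code $\C_0^{+}$ obtained from $\C_0$ by adjoining the codeword $5$: regarding each $T_j$ as a trunk in $\C_0^{+}$ defines a morphism $\bar f:\C_0^{+}\to 2^{[m]}$ restricting to $f$ on $\C_0$, with $\bar f(\C_0^{+})=\D\cup\{\bar f(5)\}$, and $\bar f(5)=\emptyset$ unless $\Tk_{\C_0}(5)$ is some $T_{j_0}$, in which case $\bar f(5)=\{j_0\}$. Since $\emptyset=f(\emptyset)\in\D$, and since $\{j_0\}$ occurs as $f(45)$ or as $f(56)$ whenever $\Tk(4)$ or $\Tk(6)$ is not among the $T_j$, in all of these cases $\bar f(\C_0^{+})=\D$ and $\D$ is convex by Theorem \ref{thm:imageconvex}. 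The lone remaining configuration is that $\Tk(4),\Tk(5),\Tk(6)$ are all defining trunks, so that the omitted irreducible trunk lies in $\{\Tk(1),\Tk(2),\Tk(3)\}$; the reducedness constraint then leaves only finitely many candidate images $\D$, which I would dispatch individually — each turns out to be max-intersection complete or a small code such as $\{123,12,23,\emptyset\}$, realizable in $\R^1$. The careful bookkeeping in (d) is the delicate part, and it is also exactly the verification most naturally carried out with the Sage library developed in the paper, by enumerating all images of $\C_0$ and testing each for convexity.
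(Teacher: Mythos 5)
Parts (a), (b), and (c) of your plan are correct and closely mirror the paper's own argument. Your non-convexity argument is the same application of Lemma \ref{lem:obstructions}, with the minor cosmetic difference that you invoke connectedness of the segment rather than the paper's observation that ``both these sets have nonempty intersection with the line $\ldots$ so they must overlap somewhere along the line''; either way one lands at a point in the $3456$ region, then contradicts the absence of a codeword containing $\{1,2,3,5\}$. Your enumeration of the $18$ faces of $\Delta(\C_0)\setminus\C_0$ and their links in (b) agrees with the paper's list, and in (c) you correctly identify all proper trunks, note that five of the six simple trunks are max-intersection complete, and show $\Tk(5)$ is convex (the paper exhibits explicit intervals $U_1=(0,1)$, $U_2=(2,3)$, $U_3=(1,2)$, $U_4=(0,2)$, $U_5=\emptyset$, $U_6=(1,3)$ for the isomorphic code $\{346,14,26,4,6\}$, while you invoke Theorem \ref{thm:productconvex} after splitting off neuron $5$; both work).

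The gap is in (d), which is exactly where the real difficulty lies. Your device of lifting a morphism $f:\C_0\to\D$ to $\bar f:\C_0^{+}\to 2^{[m]}$, where $\C_0^{+}=\C_0\cup\{5\}$ is max-intersection complete and hence convex, is a genuinely nice idea that differs from the paper's route (the paper does this step entirely in Sage, enumerating the reduced images of $\C_0$ and comparing them against the images of the Lienkaemper--Shiu--Woodstock codes $\C,\D,\E$, which are known to be convex). Your reduction correctly handles every case where at least one of $\Tk(4),\Tk(5),\Tk(6)$ is omitted: in those cases $\bar f(5)$ equals $\emptyset$, $f(45)$, or $f(56)$, so $\bar f(\C_0^{+})=\D$ is the image of a convex code. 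But you never actually verify the residual configuration where $\Tk(4),\Tk(5),\Tk(6)$ are all defining trunks and one or more of $\Tk(1),\Tk(2),\Tk(3)$ is omitted (possibly together with singleton trunks such as $\{145\}$, which can appear as a defining trunk without creating redundancy once $\Tk(1)$ is gone). You assert these images ``turn out to be max-intersection complete or a small code $\ldots$ realizable in $\R^1$'' and say you ``would dispatch individually,'' but no dispatching is done, and the claim is not self-evident. This is precisely the check that the paper carries out computationally, and as written your proposal leaves it as a promissory note rather than a proof. To close the gap you would need to enumerate the admissible trunk-sets in that configuration (respecting the no-intersection reducedness constraint from Lemma \ref{lem:imageredundant}), compute the corresponding images, and certify each one convex by hand or by the Sage library.
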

\begin{proof} We begin by arguing that $\C_0$ is not convex using Lemma \ref{lem:obstructions}. Suppose that $\C_0$ has some convex realization $\{U_1,\ldots, U_6\}$. Let $p_{145}$ be a point in the codeword region for 145, and let $p_{256}$ be a point in the codeword region for 256. Consider the line segment $L$ between these two points. By convexity $L$ is contained in $U_5$. The only codewords involving neuron 5 are $3456, 145, 256, 45,$ and $56$. We see from these codewords that $L$ is covered by the sets $U_4$ and $U_6$. Both these sets have nonempty intersection with the line (namely at $p_{145}$ and $p_{256}$ respectively), and so they must overlap somewhere along the line. The only place where the sets $U_4$, $U_5$, and $U_6$ all intersect is in the codeword region for 3456. Thus there exists a point $p_{3456}$ on $L$ which is in particular in the set $U_3$.

The points $p_{145}, p_{256}$, and $p_{3456}$ are all colinear, and contained in the sets $U_1, U_2$, and $U_3$ respectively. From the code $\C_0$ we see that the $U_i$ satisfy the hypotheses of Lemma \ref{lem:obstructions}. Thus the line segment $L$ must contain a point in $U_1\cap U_2\cap U_3$. But there is no codeword in $\C_0$ whose support contains $\{1,2,3,5\}$,   a contradiction. Thus $\C_0$ is not convex. 

Next, we argue that all proper trunks of $\C_0$ are convex. It is enough to argue that the simple trunks are convex. One can check that among the simple trunks, all are max-intersection complete (and hence convex by \cite{openclosed}) except for $\Tk_{\C_0}(5)=\{3456, 145, 256, 45, 56\}$. This trunk is isomorphic to the code $\{346, 14, 26, 4, 6\}$, which has a convex realization in $\R^1$ consisting of the open intervals $U_1 = (0,1)$, $U_2 =(2,3)$, $U_3 =  (1,2)$, $U_4 = (0,2)$, $U_5 = \emptyset$, and $U_6 = (1,3)$. 

To prove that $\C_0$ is minimally non-convex, it remains to show that all non-isomorphic images of $\C_0$ are convex. We prove this computationally, using Sage. Our Sage code can be found at \url{https://github.com/AmziJeffs/Neural-Code-Morphisms}. The file \texttt{LSW\_example.sage} in this repository contains all the code used in this example.

To determine that the images of the code $\C_0$ are all convex, we examine the following three codes, which are presented in \cite{obstructions}:\begin{align*}
\C &= \{2345, 123, 134, 145, 13, 14, 23, 34, 45, 3, 4, \emptyset\},\\
\D &=\{2345, 123, 134, 145, 234, 345, 13, 14, 23, 34, 45, 3, 4, \emptyset\}, \text{and} \\
\E &= \{2345, 123, 134, 145, 13, 14, 23, 34, 45, 1, 3, 4, \emptyset\}
\end{align*}
The code $\C$ above is not convex, but has no local obstructions. On the other hand, both $\D$ and $\E$ are convex, and are obtained from $\C$ by adding certain non-maximal codewords. Our code computes all the reduced images under morphisms of $\C$, $\D$, and $\E$, and compares the resulting sets. We know that all codes which are images of $\D$ or $\E$ are convex, but those that are images of $\C$ may not be convex. Our computations took approximately 45 minutes in Sage, and gave us four reduced codes which are images of $\C$ but not $\D$ or $\E$. These codes are $\C$, $\C_0$, and the two codes\begin{align*}
\C_1=&\{1236, 3456, 145, 256, 26, 36, 45, 56, 1, 6, \emptyset\}, \text{ and}\\
\C_2 = &\{124, 135, 145, 234, 14,15,24,3,4,\emptyset\}. 
\end{align*}
The code $\C_2$ above is convex, with a convex realization $\R^2$ as shown below.
\[\includegraphics[scale=0.55]{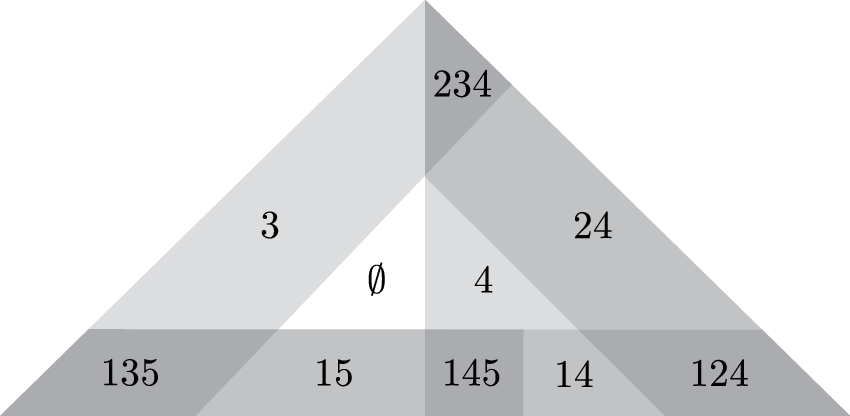}\]
It turns out that $\C_0$ is the image of $\C_1$ under the morphism defined by the trunks \begin{align*}
T_1&=\Tk_{\C_1}(1)&&&
T_2&=\Tk_{\C_1}(2)&&&
T_3&=\Tk_{\C_1}(3)\\
T_4&=\Tk_{\C_1}(4)&&&
T_5&=\Tk_{\C_1}(5)&&&
T_6&=\Tk_{\C_1}(\{5,6\}).
\end{align*}
We thus get a chain of surjective maps $\C\to \C_1\to \C_0$, none of which is an isomorphism. From this we conclude that all images of $\C_0$ other than itself must be convex, since they will be either $\C_2$, or they will be some image of $\D$ or $\E$. Thus $\C_0$ is minimally non-convex. 

We can summarize the situation we have described visually. In the figure below, the shaded regions represent the respective down-sets of $\C, \D$, and $\E$ in $\ParCode$. The wavy line represents the boundary between convex and non-convex codes in $\ParCode$. \[
\includegraphics[scale=0.95]{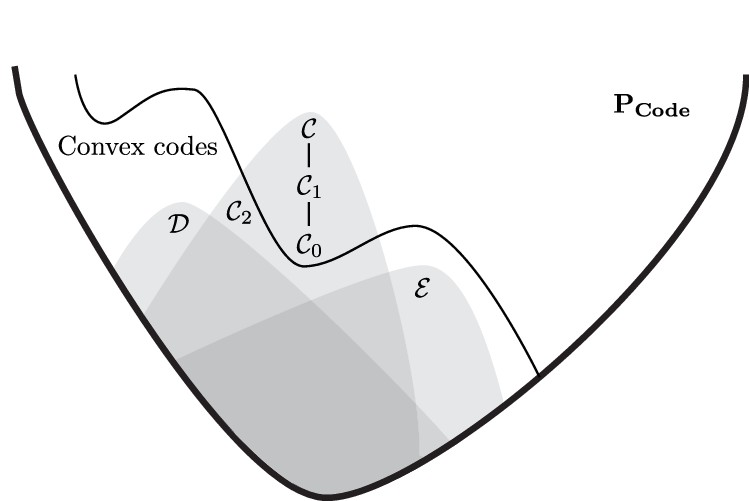}
\]

Finally, we prove that $\C_0$ has no local obstructions of the first or second kind. To prove this it suffices to check that $\C_0$ has no local obstructions of the second kind. We must check for all $\sigma\in\Delta(\C_0)\setminus \C_0$ that $\link_{\Delta(\C_0)}(\sigma)$ is collapsible. We list all the links below:
\begin{itemize}
\item For $\sigma\in \{346, 456, 356, 345, 12, 13, 23, 15, 14, 25, 26\}$ the link is a point.
\item For $\sigma\in \{34,35,36,46\}$ the link is an edge.
\item For $\sigma\in \{4,6\}$, the link is a triangle with an extra edge glued to one vertex.
\item For $\sigma = 5$  the link is a triangle with two edges added, each glued to a separate vertex.
\end{itemize}
The links described above are all collapsible, so $\C_0$ has no local obstructions of the second kind. This concludes the proof.
\end{proof}

\begin{remark}
The Sage code used in the proof above has a wide array of functionalities which extend beyond simply computing the images of a code. Some of these features include testing a code for local obstructions, and computing a reduced representative for its isomorphism class in $\Code$. We encourage the interested reader to download our Sage code and create their own examples, and add new functionality.
\end{remark}

The problem of describing minimally non-convex codes in general perhaps appears daunting, given the involved nature of the above example. However, given the success of reducing to minimal obstructions in other mathematical projects, we believe that investigating minimally non-convex codes will be a productive avenue to characterizing all convex codes. In modern mathematics ``minimal" or ``irreducible" objects are ubiquitous, and mathematicians are well practiced at understanding them. 
 Furthermore, this approach gives the task of characterizing convex codes a clear program under which to proceed, and provides a unifying umbrella under which to contextualize existing and future results.

\section{Morphisms and the Neural Ring}\label{sec:algebra}

In this section we describe how our notion of morphism between codes relates to ring homomorphisms between neural rings. We will see that when we equip the class of neural rings with a certain type of ring homomorphism, we obtain an equivalence of categories between $\Code$ and the category of neural rings. We begin by recalling several definitions relevant to the neural ring. For a more comprehensive review, see \cite{neuralring13}.

Let $\F_2$ be the two element field. Recall that any polynomial $p(x_1,\ldots, x_n)\in \F_2[x_1,\ldots, x_n]$ defines a function $p:2^{[n]}\to \F_2$, where evaluation of $p$ at a codeword $c\in 2^{[n]}$ is given by replacing $x_i$ by $1$ if $i\in c$, and by 0 otherwise. 
\begin{definition}[\cite{neuralring13}]
Let $\C\subseteq 2^{[n]}$ be a code. The \emph{vanishing ideal} of $\C$ is \[
I_\C \od \{p\in \F_2[x_1,\ldots, x_n] \mid p(c) = 0\text{ for all } c\in \C\}\subseteq \F_2[x_1,\ldots, x_n].
\]
The \emph{neural ring} of $\C$ is the quotient ring $R_\C\od \F_2[x_1,\ldots, x_n]/I_\C$, together with the coordinate functions $x_i\in R_\C$. 
\end{definition}

In \cite{neuralring13} it is proven that the neural ring uniquely determines its associated code, and vice versa. Note that the neural ring even tells us the number of neurons in a code, since this is the number of coordinate functions. This is in contrast to our practice of ignoring trivial neurons. For example, we think of $2^{[2]}\subseteq 2^{[3]}$ as equal to $2^{[2]}\subseteq 2^{[2]}$, while on the other hand the neural ring distinguishes these two situations. 

A useful fact about the neural ring is that it is isomorphic to the ring of functions from $\C$ to $\F_2$. Thus to prove that two elements of the neural ring are equal, it suffices to show that they are the same when regarded as functions. 

Before presenting our main result, we require a few more definitions. For any $\sigma\subseteq[n]$, the monomial $\prod_{i\in \sigma} x_i$ will be denoted $x_\sigma$. For any $c\in 2^{[n]}$, we define the \emph{indicator function} of $c$ as \[
\rho_c \od \prod_{i\in c} x_i\prod_{j\notin c} (1-x_j) \in \F_2[x_1,\ldots, x_n].
\]
Note that the function $\rho_c$ has the property that it evaluates to 1 only at $c$. Finally, we require one last definition, given below.
\begin{definition}\label{def:monomialmap}
Let $R_\C$ and $R_\D$ be neural rings with coordinates $\{x_1,\ldots, x_n\}$ and $\{y_1,\ldots, y_m\}$ respectively. A \emph{monomial map} from $R_\C$ to $R_\D$ is a ring homomorphism $\phi:R_\C\to R_\D$ with the property that if $p\in R_\C$ is a monomial in the $x_i$, then $\phi(p)$ is a monomial in the $y_j$ or it is zero. 
\end{definition}

\begin{customthm}{\ref{thm:neurring}}
Let $\NeurRing$ be the category whose objects are neural rings, and whose morphisms are monomials maps. There is a contravariant equivalence of categories $R:\Code\to \NeurRing$ given by associating a code to its neural ring, and associating a morphism $f:\C\to\D$ to the ring homomorphism $R(f):R_\D\to R_\C$ given by precomposition with $f$. 
\end{customthm}

\begin{proof}[Proof of Theorem \ref{thm:neurring}]
We will let $f^*$ denote $R(f)$ for any morphism $f:\C\to \D$. We start by showing that $R$ gives us a well defined function from morphisms $\C\to \D$ to monomial maps $R_\D\to R_\C$. We must show that if $f:\C\to\D$ is a morphism of codes, then $f^*:R_\D\to R_\C$ is a monomial map. If we can show that $f^*(y_j)$ is either zero or a monomial for all $y_j$, then we will be done. To this end, suppose that $y_j$ is such that $f^*(y_j)\neq 0$. Then observe that the codewords $c\in \C$ where $f^*(y_j)$ evaluates to 1 are exactly those in $f^{-1}(\Tk_\D(j))$. Indeed, we have the following chain of equivalences:
\begin{align*}
& \quad f^*(y_j)(c) = 1
&\Leftrightarrow &\quad(y_j\circ f)(c) = 1
&\Leftrightarrow &\quad y_j(f(c)) = 1
&\Leftrightarrow &\quad j\in f(c)
&\Leftrightarrow &\quad c\in f^{-1}(\Tk_\D(j)).
\end{align*}

 If this trunk is empty, then $f^*(y_j) = 0$. Otherwise, there exists $\sigma\subseteq [n]$ such that $f^{-1}(\Tk_\D(j)) = \Tk_\C(\sigma)$. In this case, $f^*(y_j) = x_\sigma$ as functions, since $f^*(y_j)$ is equal to 1 exactly on those codewords whose support contains $\sigma$. Thus $f^*$ is a monomial map.

So far we have shown that $R$ is a functor. To show that it is an equivalence of categories we must show that it is faithful, and full, and that every neural ring is isomorphic to $R_\C$ for some $\C$. This last statement is almost immediate, since all neural rings arise from codes. However, there is one subtlety:  in $\Code$ we do not discern between two codes which are equal up to including or removing trivial neurons. However, this issue is easily overcome. Suppose that $\C_1\subseteq 2^{[n]}$ and $\C_2\subseteq 2^{[m]}$ are the same code in $\Code$. That is, $\C_1=\C_2$ as sets. Then without loss of generality $m\ge n$, and there is an obvious monomial map $R_{\C_2}\to R_{\C_1}$ given by sending $x_j\mapsto 0$ for all $j>n$. This monomial map is an isomorphism in $\NeurRing$, with inverse given by $x_i\mapsto x_i$ for $i\in[n]$. Thus every object in $\NeurRing$ is isomorphic to $R_\C$ for some $\C$ in $\Code$.

To prove that $R$ is faithful, suppose $f$ and $g$ are two distinct morphisms from a code $\C$ to a code $\D$. We must show that $f^*$ and $g^*$ are distinct ring homomorphisms from $R_\D$ to $R_\C$. To this end let $c\in \C$ be such that $f(c)\neq g(c)$. Then consider the indicator function $\rho_{f(c)}: \D\to \F_2^n$, recalling that this function evaluates to 1 on a codeword if and only if that codeword is equal to $f(c)$. Then consider $f^*(\rho_c)$ and $g^*(\rho_c)$. The function $f^*(\rho_c)$ takes $c$ to 1, while $g^*(\rho_c)$  takes it to 0. This proves that $f^*$ and $g^*$ are distinct ring homomorphisms, and so the map from $\Hom_\Code(\C,\D)$ to $\Hom_{\NeurRing}(R_\D,R_\C)$ induced by $R$ is injective as desired.

It remains to show that $R$ is full. Let $\phi:R_\D\to R_\C$ be a monomial map. We must show $\phi = f^*$ for some morphism $f:\C\to \D$. We construct the appropriate morphism $f$ by defining it in terms of trunks, as in Definition \ref{def:morphismfromtrunks}. Every $y_j$ maps to either zero, or some monomial $x_{\sigma_j}$, where $\sigma_j$ is the unique maximal subset of $[n]$ such that $\phi(y_j) =x_{\sigma_j}$. Let $f:\C\to 2^{[m]}$ be the morphism defined by the trunks \[
T_j = \begin{cases} \emptyset & \text{if $\phi(y_j) = 0$}\\
\Tk_\C(\sigma_j) & \text{if $\phi(y_j) = x_{\sigma_j}$ where $\sigma_j$ is as described above}.\end{cases}
\]
for $j\in[m]$. We claim that this defines a morphism from $\C$ to $\D$. To this end let $c\in \C$, and consider the indicator function $\rho_{f(c)}\in \F_2[x_1,\ldots, x_m]$, which is 1 on $f(c)$ and zero everywhere else. We can then consider $\rho_{f(c)}$ as an element of $R_\D = \F_2[x_1,\ldots, x_m]/I_\D$. Note that \begin{align*}
\phi(\rho_{f(c)}) & = \phi\bigg(\prod_{i\in f(c)} y_i\prod_{j\notin f(c)} (1-y_j)\bigg) = \prod_{i\in f(c)} x_{\sigma_i} \prod_{j\notin f(c)} (1-x_{\sigma_j}).
\end{align*}
Now, $\phi(\rho_{f(c)})$ will yield 1 when evaluated at $c$ since $x_{\sigma_i}(c) = 1$ if and only if $c\in T_i$, which happens if and only if $i\in f(c)$. We conclude that $\rho_{f(c)}$ is nonzero in $R_\D$ and so $f(c)\in \D$. 
Thus we can restrict $f$ to a morphism from $\C$ to $\D$. 

Finally, we claim that $f^*:R_\D\to R_\C$  is the same monomial map as $\phi$. It suffices to argue that $f^*(y_j) = \phi(y_j)$ for all $j\in [m]$. Observe that $f^*(y_j) = 0$ if and only if $T_j$ is empty, which implies that $\phi(y_j) = 0$. This leaves the case that $f^*(y_j)\neq 0$, or equivalently $T_j\neq \emptyset$. In this case,  we need only argue that $f^*(y_j)$ is equal to 1 when evaluated at some $c\in \C$ if and only if $x_{\sigma_j}$ is 1 when evaluated at $c$. But the latter condition is equivalent to saying that $c\in T_j$, which is equivalent to the statement that $f^*(y_j)(c) = 1$ since $f^*(y_j)(c) = y_j(f(c))$. Therefore $f^* = \phi$, and the functor $R$ is full as desired. We conclude that $R$ is a contravariant equivalence of categories.
\end{proof}

This result gives us a concrete algebraic interpretation of morphisms between codes. The fact that this algebraic interpretation can be described easily in terms of monomial maps is strong evidence that our notion of code morphism is ``good," in the sense that it relates naturally to already existing notions in the study of convex codes, and also in the sense that we can productively transport questions about morphisms of codes to other contexts. Many of our statements in this paper have natural algebraic versions. For example, Proposition \ref{prop:simpletrunks} states that a function is a morphism if and only if the preimage of a simple trunk is a trunk. We can state this algebraically by noting that a homomorphism $R_\D\to R_\C$ is a monomial map if and only if the image of any coordinate function in $R_\D$ is a monomial in $R_\C$. In general, the translation between monomial maps and morphisms lays the foundation for building further results in tandem between the combinatorial and algebraic views of codes.  

\section{Conclusion}\label{sec:conclusion}
The main contribution of our work is the definition of morphism for combinatorial codes, and the notion of minimally non-convex codes that arises from it. Minimally non-convex codes and the poset $\ParCode$ yield a promising framework in which to investigate convex codes. Below we lay out a series of open questions, answers to which would be productive first steps towards characterizing minimally non-convex codes. 

\begin{conjecture}\label{conj:moremnc}
In \cite{sunflowers} we describe an infinite family of minimally non-convex codes which do not have local obstructions, generalizing the code $\{3456, 123, 145, 256, 45, 56, 1,2,3,\emptyset\}$ of Theorem \ref{thm:nolocalobs}. We conjecture that there are minimally non-convex, locally good codes not already enumerated in this family.
\end{conjecture}

A hint as to why Conjecture \ref{conj:moremnc} may be true is that all the minimally non-convex codes of \cite{sunflowers} are only one element away from being intersection complete. However, there exist convex codes which are arbitrarily far from being intersection complete, and it is natural to expect that  minimally non-convex codes lying above them in $\ParCode$ are likewise very far from being intersection complete.

\begin{conjecture}\label{conj:locallygreat}
The ``local obstructions of the second kind" of \cite{undecidability}, described in Definition \ref{def:localobs2}, provide a strengthening of the usual notion of local obstruction. Recall that a code without a local obstruction of the second kind is called ``locally great." We conjecture that locally great codes form a down-set in $\ParCode$.
\end{conjecture}

The fact that convex and locally good codes form a down-set in $\ParCode$ arises from a geometric argument, as encapsulated in the proof of Theorem \ref{thm:imageconvex}. However, locally great codes are characterized by collapsibility of simplicial complexes, which can be described purely combinatorially. Thus Conjecture \ref{conj:locallygreat} posits that the combinatorial notion of collapsibility---like intersection completeness and max-intersection completeness---is in some sense preserved by morphisms. A further interesting combinatorial question regarding $\ParCode$ is the following:

\begin{question}\label{question:intcomplete} Corollary \ref{cor:intcomplete} shows that intersection complete codes are exactly those that admit a surjective map from a simplicial complex. Can this simplicial complex be made canonical? More precisely,
given an intersection complete code $\C$, is there a simplicial complex $\Delta$ and a surjective morphism $f:\Delta\to \C$ such that if $\Gamma$ is a simplicial complex and $g:\Gamma\to \C$ is a surjective morphism, there exists a unique morphism $h:\Gamma\to \Delta$ with $g = f\circ h$?
\end{question}

In Section \ref{sec:algebra} we gave an algebraic interpretation of morphisms in the context of the neural ring. There exist other algebraic approaches to understanding codes, such as polarization of the neural ideal in \cite{polarization} and the use of toric ideals in \cite{toricideal}. Giving an interpretation of morphisms in these contexts could highlight connections between these different algebraic approaches, and lead to additional results in these algebraic contexts.

\begin{question}
In \cite{toricideal}  a toric ideal is associated to every combinatorial code. Is it possible to equip these ideals, or their associated toric varieties, with a notion of morphism that admits a categorical equivalence with $\Code$, similar to Theorem \ref{thm:neurring}?
\end{question}

\section*{Acknowledgements}

 I owe a great deal of thanks to Isabella Novik, who provided feedback on numerous drafts of this paper. She also pointed out connections between this work and other areas of combinatorics, and her suggestions and encouragement have been of great value. The form and content of this paper is also due to productive conversations with Mohamed Omar and Nora Youngs. Their feedback and questions were extremely useful to shaping the presentation of my results and direction of my investigations.

Anne Shiu gave comments on an initial draft of this paper, for which we are very grateful. Jose Alejandro Samper provided valuable feedback on later drafts of this paper, and strongly advocated for making it more broadly accessible. Caitlin Lienkaemper proved that locally good codes form a down-set in $\ParCode$, which we had originally posed as a conjecture. Olivia Borghi provided helpful insight on the categorical structure of $\Code$.

Finally, we thank the referees for thorough and insightful reports, including the concise proof of Corollary \ref{cor:intcomplete}.

\bibliographystyle{plain}
\bibliography{neuralcodereferences}

\end{document}